\makeatletter \@namedef{subjclassname@2010}{  \textup{2010}
Mathematics Subject Classification}
\theoremstyle{plain}
\newtheorem{theorem}{Theorem}[section]
\newtheorem{corollary}[theorem]{Corollary}
\newtheorem{proposition}[theorem]{Proposition}
\newtheorem{lemma}[theorem]{Lemma}
\theoremstyle{remark}
\theoremstyle{definition}
\theoremstyle{example}
\numberwithin{equation}{section}
\begin{document}
\title[POLYNOMIAL GROWTH CONDITIONS ]{OPERATORS WHOSE CONJUGATION\ ORBITS\
SATISFY\ POLYNOMIAL\ GROWTH CONDITIONS }
\author{HEYBETKULU MUSTAFAYEV}
\address{Van Yuzuncu Yil University, Faculty of Science, Department of
Mathematics, VAN-TURKEY}
\email{hsmustafayev@yahoo.com}
\subjclass[2010]{ 47A10, 47A11, 30D20.}
\keywords{Operator, growth condition, (local) spectrum, Beurling algebra,
decomposability.}

\begin{abstract}
Let $A$ be a bounded linear operator on a complex Banach space $X.$ For a
given $\alpha \geq 0,$ we consider the class $\mathcal{D}_{A}^{\alpha
}\left( 
\mathbb{R}
\right) $ of all bounded linear operators $T$ on $X$ for which there exists
a constant $C_{T}>0$, such that 
\begin{equation*}
\left\Vert e^{tA}Te^{-tA}\right\Vert \leq C_{T}\left( 1+\left\vert
t\right\vert \right) ^{\alpha },\text{ \ }\forall t\in 
\mathbb{R}
.
\end{equation*}%
We present complete description of the class $\mathcal{D}_{A}^{\alpha
}\left( 
\mathbb{R}
\right) $ in the case when the spectrum of $A$ consists of one point. These
results are linked to the decomposability of $A.$ Some estimates for the
norm of the commutator $AT-TA$ are obtained in the case $0\leq \alpha <1.$
\end{abstract}

\maketitle

\section{Introduction}

Let $H$ be an infinite dimensional separable Hilbert space and let $B\left(
H\right) $ be the algebra of all bounded linear operators on $H.$ A family $%
\left\{ E\right\} $ of subspaces of $H$ is called a \textit{nest} if it is
totally ordered by inclusion. Given a nest $\left\{ E\right\} ,$ Ringrose 
\cite{16} introduced the concept of the associated\textit{\ nest algebra} Alg%
$\left\{ E\right\} $ defined by 
\begin{equation*}
\text{Alg}\left\{ E\right\} =\left\{ T\in B\left( H\right) :TE\subseteq E%
\text{, }\forall E\in \left\{ E\right\} \right\} .
\end{equation*}%
In \cite{10}, Leobl and Muhly show that every nest algebra is the algebra of
all analytic operators with respect to the one-parameter representation $%
T\rightarrow e^{-itA}Te^{itA}$ of inner $\ast -$automorphisms of $B\left(
H\right) ,$ where $A$ is a self-adjoint operator on $H.$

For an invertible operator $A$ on $H$, Deddens \cite{4} introduced the set 
\begin{equation*}
\mathcal{D}_{A}:=\left\{ T\in B\left( H\right) :\sup_{n\geq 0}\left\Vert
A^{n}TA^{-n}\right\Vert <\infty \right\} .
\end{equation*}%
Notice that $\mathcal{D}_{A}$ is a normed (not necessarily closed) algebra
with identity and contains the commutant $\left\{ A\right\} ^{\prime }$ of $%
A.$ In the same paper, Deddens shows that if $A\geq 0,$ then $\mathcal{D}%
_{A} $ coincides with the nest algebra associated with the nest of spectral
subspaces of $A.$ This gives a new and convenient characterization of nest
algebras. In \cite{4}, Deddens conjectured that the equality $\mathcal{D}%
_{A}=\left\{ A\right\} ^{\prime }$ holds if the spectrum of $A$ is reduced
to $\left\{ 1\right\} .$ In \cite{17}, Roth gave a negative answer to
Deddens conjecture. He shows existence of a quasinilpotent operator $A$ for
which $\mathcal{D}_{I+A}\neq \left\{ I+A\right\} ^{\prime }.$

Let $X$ be a complex Banach space and let $B\left( X\right) $ be the algebra
of all bounded linear operators on $X.$ In \cite{18}, Williams proved that
if the spectrum of an invertible operator $A\in B\left( X\right) $ is
reduced to $\left\{ 1\right\} $ and $\sup_{n\in 
\mathbb{Z}
}\left\Vert A^{n}TA^{-n}\right\Vert <\infty $, then $AT=TA$.

For a fixed $A\in B\left( X\right) $ and $\alpha \geq 0,$ we define the
class $\mathcal{D}_{A}^{\alpha }\left( 
\mathbb{R}
\right) $ of all operators $T\in B\left( X\right) $ for which the growth of
the function%
\begin{equation*}
t\rightarrow \left\Vert e^{tA}Te^{-tA}\right\Vert
\end{equation*}%
is at most polynomial in $t\in 
\mathbb{R}
$, explicitly, there exists a constant $C_{T}>0$ such that 
\begin{equation*}
\left\Vert e^{tA}Te^{-tA}\right\Vert \leq C_{T}\left( 1+\left\vert
t\right\vert \right) ^{\alpha },\text{ \ }\forall t\in 
\mathbb{R}
.
\end{equation*}%
Notice that $\mathcal{D}_{A}^{\alpha }\left( 
\mathbb{R}
\right) $ is a linear (not necessarily closed) subspace of $B\left( X\right) 
$ and contains the commutant of $A.$ In the case $\alpha =0,$ instead of $%
\mathcal{D}_{A}^{0}\left( 
\mathbb{R}
\right) $ we will use the notation $\mathcal{D}_{A}\left( 
\mathbb{R}
\right) .$ Notice also that $\mathcal{D}_{A}\left( 
\mathbb{R}
\right) $ is a normed (not necessarily closed) algebra with identity.

In Section 2, we give complete characterization of the class $\mathcal{D}%
_{A}^{\alpha }\left( 
\mathbb{R}
\right) $ in the case when the spectrum of $A$ consists of one point.
Section 3 contains results related to the decomposability of $A$. In the
case $0\leq \alpha <1,$ some estimates for the norm of the commutator $AT-TA$
are obtained in Section 4, where $T\in \mathcal{D}_{A}^{\alpha }\left( 
\mathbb{R}
\right) .$

\section{The class $\mathcal{D}_{A}^{\protect\alpha }\left( 
\mathbb{R}
\right) $}

In this section, we give complete characterization of the class $\mathcal{D}%
_{A}^{\alpha }\left( 
\mathbb{R}
\right) $ in the case when the spectrum of $A$ consists of one point. As
usual, $\sigma \left( T\right) $ will denote the spectrum of the operator $%
T\in B\left( X\right) .$ Throughout, $\left[ \alpha \right] $ denotes the
integer part of $\alpha \in 
\mathbb{R}
.$

Let $A\in B\left( X\right) $ and $\Delta _{A}$ be the inner derivation on $%
B\left( X\right) ;$%
\begin{equation*}
\Delta _{A}:T\mapsto AT-TA,\text{ \ }T\in B\left( X\right) .
\end{equation*}%
Then, we can write%
\begin{equation*}
\Delta _{A}^{n}\left( T\right) =\sum\limits_{k=0}^{n}\left( -1\right) ^{k}%
\binom{n}{k}A^{n-k}TA^{k}\text{ \ }\left( n\in 
\mathbb{N}
\right) .
\end{equation*}

We have the following:

\begin{theorem}
If the spectrum of the operator $A\in B\left( X\right) $ consists of one
point, then 
\begin{equation*}
\mathcal{D}_{A}^{\alpha }\left( 
\mathbb{R}
\right) =\ker \Delta _{A}^{\left[ \alpha \right] +1}.
\end{equation*}%
In particular, if $0\leq \alpha <1,$ then $\mathcal{D}_{A}^{\alpha }\left( 
\mathbb{R}
\right) =\left\{ A\right\} ^{\prime }.$
\end{theorem}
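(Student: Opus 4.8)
The plan is to reduce everything to an entire-function statement on the scalar level. Write $\sigma(A)=\{\lambda\}$. Since $e^{tA}Te^{-tA}=e^{t(A-\lambda I)}Te^{-t(A-\lambda I)}$ and $\Delta_{A}=\Delta_{A-\lambda I}$, both $\mathcal{D}_{A}^{\alpha}(\mathbb{R})$ and $\ker\Delta_{A}^{[\alpha]+1}$ are unchanged when $A$ is replaced by $A-\lambda I$, so I may assume $A$ is quasinilpotent, i.e. $\Vert A^{n}\Vert^{1/n}\to 0$. The basic identity is $e^{tA}Te^{-tA}=e^{t\Delta_{A}}(T)=\sum_{n\geq 0}\frac{t^{n}}{n!}\Delta_{A}^{n}(T)$, obtained by differentiating both sides in $t$. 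Writing $\Delta_{A}=L_{A}-R_{A}$ as the difference of the commuting left and right multiplications and using $\sigma(\Delta_{A})\subseteq\sigma(A)-\sigma(A)=\{0\}$, the derivation $\Delta_{A}$ is itself quasinilpotent on $B(X)$, so $\Vert\Delta_{A}^{n}\Vert^{1/n}\to 0$. Consequently $F(z):=\sum_{n\geq 0}\frac{z^{n}}{n!}\Delta_{A}^{n}(T)$ is an entire $B(X)$-valued function of exponential type $0$ (for every $\varepsilon>0$ one has $\Vert F(z)\Vert\leq C_{\varepsilon}e^{\varepsilon|z|}$), and $F(t)=e^{tA}Te^{-tA}$ for real $t$.

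For the inclusion $\ker\Delta_{A}^{[\alpha]+1}\subseteq\mathcal{D}_{A}^{\alpha}(\mathbb{R})$, suppose $\Delta_{A}^{[\alpha]+1}(T)=0$. Then $F(t)=\sum_{n=0}^{[\alpha]}\frac{t^{n}}{n!}\Delta_{A}^{n}(T)$ is a $B(X)$-valued polynomial of degree at most $[\alpha]$, so $\Vert e^{tA}Te^{-tA}\Vert\leq C(1+|t|)^{[\alpha]}\leq C(1+|t|)^{\alpha}$, which is exactly the membership condition.

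The substantive inclusion is $\mathcal{D}_{A}^{\alpha}(\mathbb{R})\subseteq\ker\Delta_{A}^{[\alpha]+1}$. Assume $\Vert F(t)\Vert\leq C_{T}(1+|t|)^{\alpha}$ for all real $t$. For each $\phi\in B(X)^{\ast}$ put $f_{\phi}(z)=\phi(F(z))$; this is a scalar entire function of exponential type $0$ with $|f_{\phi}(t)|\leq\Vert\phi\Vert C_{T}(1+|t|)^{\alpha}$ on $\mathbb{R}$. I would now invoke the classical fact that an entire function of exponential type $0$ whose modulus grows at most polynomially along the real axis is a polynomial: dividing by $(z+i)^{[\alpha]+1}$ and $(z-i)^{[\alpha]+1}$ makes it bounded on $\mathbb{R}$, and the Phragm\'{e}n--Lindel\"{o}f principle for the upper and lower half-planes (valid here precisely because the type is $0$) bounds $f_{\phi}$ by a fixed power of $|z|$ throughout $\mathbb{C}$, so $f_{\phi}$ is a polynomial. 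A polynomial satisfying $|f_{\phi}(t)|\leq\mathrm{const}\,(1+|t|)^{\alpha}$ on $\mathbb{R}$ must have degree $\leq\alpha$, hence degree $\leq[\alpha]$. Since the coefficient of $z^{n}$ in $f_{\phi}$ is $\phi(\Delta_{A}^{n}(T))/n!$, this yields $\phi(\Delta_{A}^{[\alpha]+1}(T))=0$; as $\phi$ is arbitrary, $\Delta_{A}^{[\alpha]+1}(T)=0$.

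The main obstacle is the half-plane Phragm\'{e}n--Lindel\"{o}f step for functions of minimal (type $0$) exponential type, the half-plane being the borderline sector for order $1$; the point that rescues the sharp exponent is that once $f_{\phi}$ is known to be a polynomial of \emph{some} degree, the real-axis bound $(1+|t|)^{\alpha}$ pins the degree down to $\leq[\alpha]$, which is what forces the exponent $[\alpha]+1$ rather than $[\alpha]+2$. Finally, for $0\leq\alpha<1$ one has $[\alpha]=0$, so $\ker\Delta_{A}^{[\alpha]+1}=\ker\Delta_{A}=\{A\}^{\prime}$, giving the stated special case.
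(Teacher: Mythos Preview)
Your argument is correct, and it takes a genuinely different route from the paper. The paper first establishes the more general Theorem~2.2 (the case $\sigma(A)\subset\mathbb{R}$) via the machinery of Beurling algebras $L^{1}_{\omega}(\mathbb{R})$, local spectral theory, and the explicit description of the minimal ideal $J_{\omega}(\{0\})$ (Proposition~2.7); Theorem~2.1 is then deduced by the same translation $A\mapsto A-\lambda I$ that you use. Your approach bypasses all of the harmonic analysis: because $\sigma(A)$ is a single point, $\Delta_{A}$ is quasinilpotent, so $z\mapsto e^{z\Delta_{A}}(T)$ is entire of minimal exponential type, and the polynomial bound on $\mathbb{R}$ together with Phragm\'en--Lindel\"of in each half-plane forces each scalar slice to be a polynomial of degree at most $[\alpha]$. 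This is considerably more elementary for Theorem~2.1 itself, but it does not immediately extend to Theorem~2.2, since when $\sigma(A)\subset\mathbb{R}$ the spectrum $\sigma(\Delta_{A})=\sigma(A)-\sigma(A)$ need not reduce to $\{0\}$ and $e^{z\Delta_{A}}(T)$ is no longer of type~$0$; the paper's Beurling-algebra machinery, by contrast, handles both theorems uniformly and also feeds into the quantitative estimates of Section~4.
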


Before to prove this theorem, we first prove the following:

\begin{theorem}
If the spectrum of the operator $A\in B\left( X\right) $ lies on the real
line, then 
\begin{equation*}
\mathcal{D}_{A}^{\alpha }\left( 
\mathbb{R}
\right) =\ker \Delta _{A}^{\left[ \alpha \right] +1}.
\end{equation*}%
In particular, if $0\leq \alpha <1,$ then $\mathcal{D}_{A}^{\alpha }\left( 
\mathbb{R}
\right) =\left\{ A\right\} ^{\prime }.$
\end{theorem}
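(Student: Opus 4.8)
The plan is to reduce the statement to a growth condition for a single operator with real spectrum. Writing $\Delta_A=L_A-R_A$, where $L_A$ and $R_A$ denote left and right multiplication by $A$ on $B\left( X\right) $, these two operators commute and each has spectrum $\sigma \left( A\right) $, so $\sigma \left( \Delta _{A}\right) \subseteq \sigma \left( A\right) -\sigma \left( A\right) \subseteq \mathbb{R}$. Moreover $e^{t\Delta _{A}}=e^{tL_{A}}e^{-tR_{A}}$, which yields the identity $e^{tA}Te^{-tA}=e^{t\Delta _{A}}\left( T\right) =\sum_{n=0}^{\infty }\frac{t^{n}}{n!}\Delta _{A}^{n}\left( T\right) $. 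Thus everything is governed by the single operator $D:=\Delta _{A}$ on $Y:=B\left( X\right) $, with $\sigma \left( D\right) \subseteq \mathbb{R}$, and the vector $x:=T$. The inclusion $\ker \Delta _{A}^{\left[ \alpha \right] +1}\subseteq \mathcal{D}_{A}^{\alpha }\left( \mathbb{R}\right) $ is immediate: if $\Delta _{A}^{\left[ \alpha \right] +1}\left( T\right) =0$ the series terminates and $t\mapsto e^{tA}Te^{-tA}$ is a $B\left( X\right) $-valued polynomial of degree $\leq \left[ \alpha \right] $, so its norm is $O\bigl( (1+\left\vert t\right\vert )^{\left[ \alpha \right] }\bigr) =O\bigl( (1+\left\vert t\right\vert )^{\alpha }\bigr) $.

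For the reverse inclusion I would prove the abstract assertion: if $D\in B\left( Y\right) $ has $\sigma \left( D\right) \subseteq \mathbb{R}$ and $\left\Vert e^{tD}x\right\Vert \leq C(1+\left\vert t\right\vert )^{\alpha }$ for all $t\in \mathbb{R}$, then $D^{\left[ \alpha \right] +1}x=0$. The main tool is the local resolvent. Since $t\mapsto e^{tD}x$ has polynomial growth, the integrals $R_{+}\left( \lambda \right) =\int_{0}^{\infty }e^{-\lambda t}e^{tD}x\,dt$ and $R_{-}\left( \lambda \right) =-\int_{-\infty }^{0}e^{-\lambda t}e^{tD}x\,dt$ converge and are analytic on the half-planes $\operatorname{Re}\lambda >0$ and $\operatorname{Re}\lambda <0$ respectively, and an integration by parts shows $\left( \lambda -D\right) R_{\pm }\left( \lambda \right) =x$ there. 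On the other hand, because $\sigma \left( D\right) \subseteq \mathbb{R}$, the global resolvent $\lambda \mapsto \left( \lambda -D\right) ^{-1}x$ is analytic on $\mathbb{C}\setminus \mathbb{R}$. On each region where $\lambda -D$ is invertible these solutions of $\left( \lambda -D\right) u=x$ are unique, hence agree; gluing $R_{+}$, $R_{-}$ and $\left( \lambda -D\right) ^{-1}x$ produces a single $Y$-valued function $R$ analytic on all of $\mathbb{C}\setminus \left\{ 0\right\} $. The Neumann series gives $R\left( \lambda \right) =\sum_{n\geq 0}\lambda ^{-n-1}D^{n}x$ for large $\left\vert \lambda \right\vert $, so $R$ vanishes at $\infty $ and $0$ is its only (isolated) singularity.

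It remains to bound the order of this singularity. From the integral representation, $\left\Vert R_{+}\left( \lambda \right) \right\Vert \leq C\int_{0}^{\infty }e^{-\left( \operatorname{Re}\lambda \right) t}(1+t)^{\alpha }\,dt\leq C^{\prime }\left( \operatorname{Re}\lambda \right) ^{-\left( \alpha +1\right) }$ for small $\operatorname{Re}\lambda >0$, and symmetrically for $R_{-}$. Writing the Laurent expansion $R\left( \lambda \right) =\sum_{k\geq 1}a_{-k}\lambda ^{-k}$, whose coefficients $a_{-k}=\frac{1}{2\pi i}\oint_{\left\vert \lambda \right\vert =\rho }R\left( \lambda \right) \lambda ^{k-1}d\lambda =D^{k-1}x$ satisfy $\left\Vert a_{-k}\right\Vert \leq \rho ^{k}\max_{\left\vert \lambda \right\vert =\rho }\left\Vert R\left( \lambda \right) \right\Vert $, a bound of the form $\max_{\left\vert \lambda \right\vert =\rho }\left\Vert R\left( \lambda \right) \right\Vert \leq C\rho ^{-\left( \alpha +1\right) }$ forces $\left\Vert a_{-k}\right\Vert \leq C\rho ^{\,k-\alpha -1}\rightarrow 0$ as $\rho \rightarrow 0$ whenever $k>\alpha +1$, i.e. $a_{-k}=0$ for $k\geq \left[ \alpha \right] +2$; this is exactly $D^{\left[ \alpha \right] +1}x=0$. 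Specializing $D=\Delta _{A}$, $x=T$ yields $T\in \ker \Delta _{A}^{\left[ \alpha \right] +1}$, and for $0\leq \alpha <1$ one has $\left[ \alpha \right] =0$, whence $\ker \Delta _{A}=\left\{ A\right\} ^{\prime }$. The main obstacle is that the one-sided estimates control $\left\Vert R\left( \lambda \right) \right\Vert $ only through $\operatorname{Re}\lambda $, which degenerates near the imaginary axis; to obtain the uniform-in-direction bound $\max_{\left\vert \lambda \right\vert =\rho }\left\Vert R\left( \lambda \right) \right\Vert \leq C\rho ^{-\left( \alpha +1\right) }$ used above one must estimate $R$ in the sectors adjacent to $i\mathbb{R}$, which I would handle by a Phragmen--Lindelof argument on those sectors using the decay of $R_{\pm }$ on their bounding rays.
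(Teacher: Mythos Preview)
Your reduction to the single operator $D=\Delta_A$ on $Y=B(X)$ with $\sigma(D)\subset\mathbb{R}$ is exactly the right move, and the easy inclusion $\ker\Delta_A^{[\alpha]+1}\subseteq\mathcal{D}_A^\alpha(\mathbb{R})$ is fine. The gluing of $R_+$, $R_-$ and the global resolvent to an analytic $R$ on $\mathbb{C}\setminus\{0\}$ is also correct, and from it you implicitly obtain the crucial fact $r_D(x)=\limsup_n\|D^nx\|^{1/n}=0$ (since the Laurent series $\sum_{k\ge1}D^{k-1}x\,\lambda^{-k}$ must then converge for every $\lambda\neq0$).

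The genuine gap is in the Phragm\'en--Lindel\"of step. On a sector around $i\mathbb{R}_+$ your ray bounds $\|R(\lambda)\|\le C|\operatorname{Re}\lambda|^{-(\alpha+1)}$ are available on the two edges, but PL also needs an \emph{a priori} interior growth bound, subcritical for the opening angle, and you do not supply one. The information ``$R$ analytic on $\mathbb{C}\setminus\{0\}$'' alone does not rule out an essential singularity at $0$ with growth faster than any $\exp(c|\lambda|^{-\gamma})$; think of $R(\lambda)=\sum_{k\ge1}\lambda^{-k}/(k-1)!=\lambda^{-1}e^{1/\lambda}$, which is analytic on $\mathbb{C}\setminus\{0\}$, vanishes at $\infty$, and has $\|a_{-k}\|^{1/k}\to0$. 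On the imaginary axis $R$ equals the global resolvent $(\,i\mu-D)^{-1}x$, and for operators with real spectrum that resolvent norm can blow up arbitrarily fast as $\mu\to0$, so there is no universal interior estimate to plug in.

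The cleanest repair stays within your complex-analytic framework but shifts the PL to the orbit $F(t)=e^{tD}x$ rather than to $R$. From $r_D(x)=0$ one gets that $F$ is entire of \emph{minimal exponential type}: $\|F(t)\|\le C_\varepsilon e^{\varepsilon|t|}$ for every $\varepsilon>0$ and all $t\in\mathbb{C}$. Now apply PL for the half-plane to $G(t)=F(t)/(t+i)^{[\alpha]+1}$ on $\{\operatorname{Im}t\ge0\}$: $G$ is bounded on $\mathbb{R}$ (since $\|F(t)\|\le C(1+|t|)^\alpha$ there) and of minimal type, hence bounded on the closed upper half-plane; likewise below. Thus $\|F(t)\|\le C'(1+|t|)^{[\alpha]+1}$ on all of $\mathbb{C}$, so by Liouville $F$ is a polynomial; the real-axis bound then forces $\deg F\le[\alpha]$, i.e.\ $D^{[\alpha]+1}x=0$. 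This gives exactly the pole-order estimate on $R$ you were after.

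For comparison, the paper takes a harmonic-analysis route: it works in the Beurling algebra $L^1_\omega(\mathbb{R})$ with $\omega(t)=(1+|t|)^\alpha$, shows that the annihilator ideal $I_x=\{f:\int f(t)e^{tD}x\,dt=0\}$ has hull $\{0\}$ (this is where $\sigma_D(x)=\{0\}$ enters), and then invokes the description of the minimal closed ideal $J_\omega(\{0\})=\{f:\widehat f(0)=\cdots=\widehat f^{([\alpha])}(0)=0\}$ to read off $D^{[\alpha]+1}x=0$ directly. Your approach is more elementary and self-contained once patched; the paper's is softer but imports the ideal-theoretic structure of weighted $L^1$.
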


For related results see also, \cite{1,13}. For the proof of Theorem 2.2, we
need some preliminary results.

For an arbitrary $T\in B\left( X\right) $ and $x\in X$, we define $\rho
_{T}\left( x\right) $ to be the set of all $\lambda \in 
\mathbb{C}
$ for which there exists a neighborhood $U_{\lambda }$ of $\lambda $ with $%
u\left( z\right) $ analytic on $U_{\lambda }$ having values in $X$ such that 
$\left( zI-T\right) u\left( z\right) =x$ for all $z\in U_{\lambda }$. This
set is open and contains the resolvent set $\rho \left( T\right) $ of $T$.
By definition, the \textit{local spectrum} of $T$ at $x\in X$, denoted by $%
\sigma _{T}\left( x\right) $ is the complement of $\rho _{T}\left( x\right) $%
, so it is a compact subset of $\sigma \left( T\right) $. This object is
most tractable if the operator $T$ has the \textit{single-valued extension
property }(SVEP), i.e. for every open set $U$ in $%
\mathbb{C}
,$ the only analytic function $u:U\rightarrow X$ for which the equation $%
\left( zI-T\right) u\left( z\right) =0$ holds is the constant function $%
u\equiv 0$. If $T$ has SVEP, then $\sigma _{T}\left( x\right) \neq \emptyset
,$ whenever $x\in X\diagdown \left\{ 0\right\} $ \cite[Proposition 1.2.16]{9}%
. It can be seen that an operator $T\in B\left( X\right) $ having spectrum
without interior points has the SVEP. Ample information about local spectra
can be found in \cite{2,3,5,9}.

The \textit{local spectral radius }of $T\in B\left( X\right) $ at $x\in X$
is defined by 
\begin{equation*}
r_{T}\left( x\right) =\sup \left\{ \left\vert \lambda \right\vert :\lambda
\in \sigma _{T}\left( x\right) \right\} .
\end{equation*}%
If $T$ has SVEP, then%
\begin{equation*}
r_{T}\left( x\right) =\underset{n\rightarrow \infty }{\overline{\lim }}%
\left\Vert T^{n}x\right\Vert ^{\frac{1}{n}}
\end{equation*}%
\cite[Proposition 3.3.13]{9}.

Recall that a \textit{weight} \textit{function} (shortly a \textit{weight}) $%
\omega $ is a continuous function on $%
\mathbb{R}
$ such that $\omega \left( t\right) \geq 1$ and $\omega \left( t+s\right)
\leq \omega \left( t\right) \omega \left( s\right) $ for all $t,s\in 
\mathbb{R}
.$ For a weight function $\omega $, by $L_{\omega }^{1}\left( 
\mathbb{R}
\right) $ we will denote the Banach space of the functions $f\in L^{1}\left( 
\mathbb{R}
\right) $ with the norm%
\begin{equation*}
\left\Vert f\right\Vert _{1,\omega }=\int_{%
\mathbb{R}
}\left\vert f\left( t\right) \right\vert \omega \left( t\right) dt<\infty .
\end{equation*}%
The space $L_{\omega }^{1}\left( 
\mathbb{R}
\right) $ with convolution product and the norm $\left\Vert \cdot
\right\Vert _{1,\omega }$ is a commutative Banach algebra and is called 
\textit{Beurling algebra}. The dual space of $L_{\omega }^{1}\left( 
\mathbb{R}
\right) ,$ denoted by $L_{\omega }^{\infty }\left( 
\mathbb{R}
\right) $, is the space of all measurable functions $g$ on $%
\mathbb{R}
$ such that 
\begin{equation*}
\left\Vert g\right\Vert _{\omega ,\infty }:=\text{ess}\sup_{t\in 
\mathbb{R}
}\frac{\left\vert g\left( t\right) \right\vert }{\omega \left( t\right) }%
<\infty .
\end{equation*}%
The duality being implemented by the formula%
\begin{equation*}
\langle g,f\rangle =\int_{%
\mathbb{R}
}g\left( -t\right) f\left( t\right) dt\text{, \ }\forall f\in L_{\omega
}^{1}\left( 
\mathbb{R}
\right) ,\text{ }\forall g\in L_{\omega }^{\infty }\left( 
\mathbb{R}
\right) .
\end{equation*}%
We say that the weight $\omega $ is \textit{regular} if 
\begin{equation*}
\int_{%
\mathbb{R}
}\frac{\log \omega \left( t\right) }{1+t^{2}}dt<\infty .
\end{equation*}%
For example, $\omega \left( t\right) =\left( 1+\left\vert t\right\vert
\right) ^{\alpha }$ $\left( \alpha \geq 0\right) $\ is a regular weight and
is called \textit{polynomial weight}. If $\omega $ is a regular weight, then 
\begin{equation}
\lim_{t\rightarrow +\infty }\frac{\log \omega \left( t\right) }{t}%
=\lim_{t\rightarrow +\infty }\frac{\log \omega \left( -t\right) }{t}=0.
\label{2.1}
\end{equation}%
Consequently, the maximal ideal space of the algebra $L_{\omega }^{1}\left( 
\mathbb{R}
\right) $ can be identified with $%
\mathbb{R}
$ (for instance, see \cite{6,11,14}). The Gelfand transform of $f\in
L_{\omega }^{1}\left( 
\mathbb{R}
\right) $ is just the Fourier transform of $f.$ Moreover, the algebra $%
L_{\omega }^{1}\left( 
\mathbb{R}
\right) $ is regular in the Shilov sense \cite[Ch.VI]{6}. Notice also that $%
L_{\omega }^{1}\left( 
\mathbb{R}
\right) $ is Tauberian, that is, the set $\left\{ f\in L_{\omega }^{1}\left( 
\mathbb{R}
\right) :\text{supp}\widehat{f}\text{ is compact}\right\} $ is dense in $%
L_{\omega }^{1}\left( 
\mathbb{R}
\right) $ \cite[Ch.5]{11}. Below, we will assume that $\omega $ is a regular
weight.

Denote by $M_{\omega }\left( 
\mathbb{R}
\right) $ the Banach algebra (with respect to convolution product) of all
complex regular Borel measures on $%
\mathbb{R}
$ such that 
\begin{equation*}
\left\Vert \mu \right\Vert _{1,\omega }:=\int_{%
\mathbb{R}
}\omega \left( t\right) d\left\vert \mu \right\vert \left( t\right) <\infty .
\end{equation*}%
The algebra $L_{\omega }^{1}\left( 
\mathbb{R}
\right) $ is naturally identifiable with a closed ideal of $M_{\omega
}\left( 
\mathbb{R}
\right) .$ By $\widehat{f}$ and $\widehat{\mu }$, we will denote the Fourier
and the Fourier-Stieltjes transform of $f\in L_{\omega }^{1}\left( 
\mathbb{R}
\right) $ and $\mu \in M_{\omega }\left( 
\mathbb{R}
\right) $, respectively.

As usual, to any closed subset $S$ of $%
\mathbb{R}
,$ the following two closed ideals of $L_{\omega }^{1}\left( 
\mathbb{R}
\right) $ associated:%
\begin{equation*}
I_{\omega }\left( S\right) :=\left\{ f\in L_{\omega }^{1}\left( 
\mathbb{R}
\right) :\widehat{f}\left( S\right) =\left\{ 0\right\} \right\}
\end{equation*}%
and 
\begin{equation*}
J_{\omega }\left( S\right) :=\overline{\left\{ f\in L_{\omega }^{1}\left( 
\mathbb{R}
\right) :\text{supp}\widehat{f}\text{ is compact and supp}\widehat{f}\cap
S=\emptyset \right\} }.
\end{equation*}%
The ideals $J_{\omega }\left( S\right) $ and $I_{\omega }\left( S\right) $
are respectively, the smallest and the largest closed ideals in $L_{\omega
}^{1}\left( 
\mathbb{R}
\right) $ with hull $S$. When these two ideals coincide, the set $S$ is said
to be a \textit{set of} \textit{synthesis} for $L_{\omega }^{1}\left( 
\mathbb{R}
\right) $ (for instance, see \cite[Sect. 8.3]{8}).

Notice that $I_{\omega }\left( \left\{ \infty \right\} \right) =L_{\omega
}^{1}\left( 
\mathbb{R}
\right) $ and%
\begin{equation*}
J_{\omega }\left( \left\{ \infty \right\} \right) =\overline{\left\{ f\in
L_{\omega }^{1}\left( 
\mathbb{R}
\right) :\text{supp}\widehat{f}\text{ is compact}\right\} }.
\end{equation*}%
Since the algebra $L_{\omega }^{1}\left( 
\mathbb{R}
\right) $ is Tauberian, we have $I_{\omega }\left( \left\{ \infty \right\}
\right) =J_{\omega }\left( \left\{ \infty \right\} \right) $. Hence, $%
\left\{ \infty \right\} $ is a set of synthesis for $L_{\omega }^{1}\left( 
\mathbb{R}
\right) .$ Notice also that if $\omega \left( t\right) =\left( 1+\left\vert
t\right\vert \right) ^{\alpha }$ ($0\leq \alpha <1),$ then each point of $%
\mathbb{R}
$ is a set of synthesis for $L_{\omega }^{1}\left( 
\mathbb{R}
\right) $ \cite[Ch.6]{15}.

Let $M$ be a non-void subset of $L_{\omega }^{\infty }\left( 
\mathbb{R}
\right) $. A point $\lambda \in 
\mathbb{R}
$ is said to be \textit{Beurling spectrum }of $M$ if the character $%
e^{-i\lambda t}$ belongs to the weak$^{\ast }$-closed translation invariant
subspace of $L_{\omega }^{\infty }\left( 
\mathbb{R}
\right) $ generated by $M$. By sp$_{B}\left\{ M\right\} ,$ we will denote
the set of all Beurling spectrum of $M$. It is easy to verify that 
\begin{equation*}
\text{sp}_{B}\left\{ M\right\} =\text{hull}\left( \mathcal{I}_{\left\{
M\right\} }\right) \text{,}
\end{equation*}%
where%
\begin{equation*}
\mathcal{I}_{\left\{ M\right\} }=\left\{ f\in L_{\omega }^{1}\left( 
\mathbb{R}
\right) :f\ast g=0\text{, \ }\forall g\in M\right\}
\end{equation*}%
is a closed ideal of $L_{\omega }^{1}\left( 
\mathbb{R}
\right) $. Notice also that 
\begin{equation*}
\text{sp}_{B}\left\{ M\right\} =\overline{\underset{g\in M}{\bigcup }\text{sp%
}_{B}\left\{ g\right\} }\text{.}
\end{equation*}%
For $g\in L_{\omega }^{\infty }\left( 
\mathbb{R}
\right) ,$ we put $g^{\vee }\left( t\right) :=g\left( -t\right) .$ Clearly, 
\begin{equation*}
\text{sp}_{B}\left\{ g^{\vee }\right\} =\left\{ -\lambda :\lambda \in \text{%
sp}_{B}\left\{ g\right\} \right\} .
\end{equation*}

Recall that the \textit{Carleman transform} of $g\in L_{\omega }^{\infty
}\left( 
\mathbb{R}
\right) $ is defined as the analytic function $G\left( z\right) $ on $%
\mathbb{C}
\diagdown i%
\mathbb{R}
,$ given by 
\begin{equation*}
G\left( z\right) =\left\{ 
\begin{tabular}{l}
$\int\limits_{0}^{\infty }e^{-zt}{g\left( t\right) dt\text{, }\ }$%
\textnormal{Re}${z>0\text{;}}$ \\ 
${-}\int\limits_{-\infty }^{0}e^{-zt}{g\left( t\right) dt\text{, \ Re}z<0%
\text{.}}$%
\end{tabular}%
\right.
\end{equation*}%
It is known \cite{7} that $\lambda \in $sp$_{B}\left\{ g\right\} $ if and
only if the Carleman transform $G\left( z\right) $ of $g$ has no analytic
extension to a neighborhood of $i\lambda .$

Let $\omega $ be a weight function, $T\in B\left( X\right) $ and let%
\begin{equation*}
E_{T}^{\omega }:=\left\{ x\in X:\exists C>0,\text{ }\left\Vert
e^{tT}x\right\Vert \leq C\omega \left( t\right) \text{, \ }\forall t\in 
\mathbb{R}
\right\} .
\end{equation*}%
Then, $E_{T}^{\omega }$ is a linear (non-closed, in general) subspace of $X$%
. If $x\in E_{T}^{\omega }$, then for an arbitrary $\mu \in M_{\omega
}\left( 
\mathbb{R}
\right) ,$ we can define $x_{\mu }\in X$ by 
\begin{equation*}
x_{\mu }=\int_{%
\mathbb{R}
}e^{tT}xd\mu \left( t\right) \text{.}
\end{equation*}%
Clearly, $\mu \mapsto x_{\mu }$ is a bounded linear map from $M_{\omega
}\left( 
\mathbb{R}
\right) $ into $X$; 
\begin{equation*}
\left\Vert x_{\mu }\right\Vert \leq C\left\Vert \mu \right\Vert _{1,\omega }%
\text{, \ }\forall \mu \in M_{\omega }\left( 
\mathbb{R}
\right) .
\end{equation*}%
Further, from the identity%
\begin{equation*}
e^{tT}x_{\mu }=\int_{%
\mathbb{R}
}e^{\left( t+s\right) T}xd\mu \left( s\right) ,
\end{equation*}%
we can write 
\begin{eqnarray*}
\left\Vert e^{tT}x_{\mu }\right\Vert &\leq &\int_{%
\mathbb{R}
}\left\Vert e^{\left( t+s\right) T}x\right\Vert d\left\vert \mu \right\vert
\left( s\right) \\
&\leq &C\int_{%
\mathbb{R}
}\omega \left( t+s\right) d\left\vert \mu \right\vert \left( s\right) \\
&\leq &C\int_{%
\mathbb{R}
}\omega \left( t\right) \omega \left( s\right) d\left\vert \mu \right\vert
\left( s\right) \\
&=&C\left\Vert \mu \right\Vert _{1,\omega }\omega \left( t\right) \text{, \ }%
\forall t\in 
\mathbb{R}
.
\end{eqnarray*}%
This shows that $x_{\mu }\in E_{T}^{\omega }$ for every $\mu \in M_{\omega
}\left( 
\mathbb{R}
\right) .$ It is easy to check that 
\begin{equation*}
\left( x_{\mu }\right) _{\nu }=x_{\mu \ast \nu }\text{, \ }\forall \mu ,\nu
\in M_{\omega }\left( 
\mathbb{R}
\right) .
\end{equation*}%
It follows that if $x\in E_{T}^{\omega },$ then 
\begin{equation*}
I_{x}:=\left\{ f\in L_{\omega }^{1}\left( 
\mathbb{R}
\right) :x_{f}=0\right\}
\end{equation*}%
is a closed ideal of $L_{\omega }^{1}\left( 
\mathbb{R}
\right) ,$ where%
\begin{equation*}
x_{f}=\int_{%
\mathbb{R}
}f\left( t\right) e^{tT}xdt\text{.}
\end{equation*}

For a given $x\in E_{T}^{\omega },$ consider the function 
\begin{equation}
u\left( z\right) :=\left\{ 
\begin{tabular}{l}
$\int\limits_{0}^{\infty }e^{-zt}e^{tT}x{dt\text{, \ }\func{Re}z>0\text{;}}$
\\ 
${-}\int\limits_{-\infty }^{0}e^{-zt}e^{tT}x{dt\text{, \ }\func{Re}z<0\text{.%
}}$%
\end{tabular}%
\right.  \label{2.2}
\end{equation}%
It follows from (2.1) that $u\left( z\right) $ is a function analytic on $%
\mathbb{C}
\diagdown $i$%
\mathbb{R}
$. Let $a:=\func{Re}z>0.$ Then, for an arbitrary $s>0,$ we can write%
\begin{equation*}
\left( zI-T\right) \int\limits_{0}^{s}e^{-zt}e^{tT}x{dt}=-\int\limits_{0}^{s}%
\frac{d}{dt}e^{t\left( T-zI\right) }x{dt=x-}e^{s\left( T-zI\right) }x.
\end{equation*}%
Since 
\begin{equation*}
\left\Vert e^{s\left( T-zI\right) }x\right\Vert =e^{-as}\left\Vert
e^{sT}x\right\Vert \leq Ce^{-as}\omega \left( s\right)
\end{equation*}%
and 
\begin{equation*}
\lim_{s\rightarrow +\infty }e^{-as}\omega \left( s\right) =0,
\end{equation*}%
we have 
\begin{equation*}
\left( zI-T\right) u\left( z\right) =x,\text{ \ }\forall z\in 
\mathbb{C}
\text{ with }\func{Re}z>0.
\end{equation*}%
Similarly,%
\begin{equation*}
\left( zI-T\right) u\left( z\right) =x,\text{ \ }\forall z\in 
\mathbb{C}
\text{ with }\func{Re}z<0.
\end{equation*}%
Hence 
\begin{equation}
\left( zI-T\right) u\left( z\right) =x\text{, \ }\forall z\in 
\mathbb{C}
\diagdown i%
\mathbb{R}
.  \label{2.3}
\end{equation}%
This clearly implies that $\sigma _{T}\left( x\right) \subset i%
\mathbb{R}
.$

Thus we have the following:

\begin{proposition}
Let $\omega $ be a regular weight. Assume that $T\in B\left( X\right) $ and $%
x\in X$ satisfy the condition $\left\Vert e^{tT}x\right\Vert \leq C\omega
\left( t\right) $ for all $t\in 
\mathbb{R}
$ and for some $C>0.$ Then, $\sigma _{T}\left( x\right) \subset i%
\mathbb{R}
.$
\end{proposition}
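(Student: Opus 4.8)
The plan is to exhibit the local resolvent of $T$ at $x$ explicitly on the two open half-planes $\func{Re}z>0$ and $\func{Re}z<0$, and thereby show that both half-planes lie in the local resolvent set $\rho_T(x)$. The natural candidate is the vector-valued Laplace-type transform $u(z)$ introduced in (2.2), which is precisely the vector analogue of the Carleman transform of the orbit $t\mapsto e^{tT}x$. If I can prove that $u$ is analytic off $i\mathbb{R}$ and solves the local resolvent equation $(zI-T)u(z)=x$ there, the inclusion $\sigma_T(x)\subseteq i\mathbb{R}$ follows from the very definition of the local spectrum.

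First I would verify convergence of the two integrals. For $a:=\func{Re}z>0$ the integrand obeys $\|e^{-zt}e^{tT}x\|\leq Ce^{-at}\omega(t)$, and since $\omega$ is regular the growth relation (2.1) gives $\log\omega(t)/t\to 0$ as $t\to+\infty$; hence $e^{-at}\omega(t)$ decays exponentially and the integral defining $u(z)$ converges absolutely. The symmetric estimate on $(-\infty,0]$ handles the case $\func{Re}z<0$. Analyticity of $u$ on $\mathbb{C}\setminus i\mathbb{R}$ then follows by differentiating under the integral sign (or via Morera's theorem), the dominating functions again coming from this exponential decay.

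The key step is the local resolvent identity. For $a=\func{Re}z>0$ the fundamental theorem of calculus yields
\[
(zI-T)\int_0^s e^{-zt}e^{tT}x\,dt=-\int_0^s\frac{d}{dt}e^{t(T-zI)}x\,dt=x-e^{s(T-zI)}x,
\]
and the boundary term vanishes in the limit because $\|e^{s(T-zI)}x\|=e^{-as}\|e^{sT}x\|\leq Ce^{-as}\omega(s)\to 0$ as $s\to+\infty$. Letting $s\to+\infty$ gives $(zI-T)u(z)=x$ for $\func{Re}z>0$, and the mirror-image computation on $(-\infty,0]$ delivers the same identity for $\func{Re}z<0$. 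I expect the vanishing of this boundary term to be the only genuinely delicate point, and it is exactly where the regularity of $\omega$ is decisive: without the sub-exponential control $e^{-as}\omega(s)\to 0$ neither the convergence nor the limit would be available.

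Finally, since $u$ is analytic in a neighborhood of every $z\notin i\mathbb{R}$ and satisfies $(zI-T)u(z)=x$ there, each such $z$ belongs to $\rho_T(x)$; that is, $\mathbb{C}\setminus i\mathbb{R}\subseteq\rho_T(x)$. Passing to complements gives $\sigma_T(x)=\mathbb{C}\setminus\rho_T(x)\subseteq i\mathbb{R}$, which is the asserted conclusion.
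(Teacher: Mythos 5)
Your proposal is correct and follows essentially the same route as the paper: it introduces the same Laplace-type transform $u(z)$ from (2.2), uses the regularity condition (2.1) to get convergence and analyticity off $i\mathbb{R}$, and establishes $(zI-T)u(z)=x$ on both half-planes via the identical boundary-term computation. No substantive differences to report.
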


Now, assume that $T$ has SVEP. We claim that $\sigma _{T}\left( x\right) $
consists of all $\lambda \in i%
\mathbb{R}
$ for which the function $u\left( z\right) $ has no analytic extension to a
neighborhood of $\lambda $. To see this, let $v\left( z\right) $ be the
analytic extension of $u\left( z\right) $ to a neighborhood $U_{\lambda }$
of $\lambda \in i%
\mathbb{R}
.$ It follows from the identity (2.3) that the function%
\begin{equation*}
w\left( z\right) :=\left( zI-T\right) v\left( z\right) -x
\end{equation*}%
vanishes on $U_{\lambda }^{+}:=\left\{ z\in U_{\lambda }:\func{Re}%
z>0\right\} $ and on $U_{\lambda }^{-}:=\left\{ z\in U_{\lambda }:\func{Re}%
z<0\right\} .$ By uniqueness theorem, $w\left( z\right) =0$ for all $z\in
U_{\lambda }.$ So we have 
\begin{equation*}
\left( zI-T\right) v\left( z\right) =x\text{, \ }\forall z\in U_{\lambda }.
\end{equation*}%
This shows that $\lambda \in \rho _{T}\left( x\right) .$ If $\lambda \in
\rho _{T}\left( x\right) \cap i%
\mathbb{R}
,$ then there exists a neighborhood $U_{\lambda }$ of $\lambda $ with $%
v\left( z\right) $ analytic on $U_{\lambda }$ having values in $X$ such that 
\begin{equation*}
\left( zI-T\right) v\left( z\right) =x\text{, \ }\forall z\in U_{\lambda }.
\end{equation*}%
By (2.3), 
\begin{equation*}
\left( zI-T\right) \left( u\left( z\right) -v\left( z\right) \right) =0,%
\text{ \ }\forall z\in U_{\lambda }^{+},\text{ }\forall z\in U_{\lambda
}^{-}.
\end{equation*}%
Since $T$ has SVEP, we have 
\begin{equation*}
u\left( z\right) =v\left( z\right) \text{, \ }\forall z\in U_{\lambda }^{+},%
\text{ }\forall z\in U_{\lambda }^{-}.
\end{equation*}%
This shows that $u\left( z\right) $ can be analytically extended to a
neighborhood of $\lambda $.

Let $x\in E_{T}^{\omega }.$ For a given $\varphi \in X^{\ast },$ define a
function $\varphi _{x}$ on $%
\mathbb{R}
$ by 
\begin{equation*}
\varphi _{x}\left( t\right) =\langle \varphi ,e^{tT}x\rangle .
\end{equation*}%
Then, $\varphi _{x}$ is continuous and 
\begin{equation*}
\left\vert \varphi _{x}\left( t\right) \right\vert \leq C\left\Vert \varphi
\right\Vert \omega \left( t\right) \text{, \ }\forall t\in 
\mathbb{R}
.
\end{equation*}%
Consequently, $\varphi _{x}\in L_{\omega }^{\infty }\left( 
\mathbb{R}
\right) $. Taking into account the identity (2.2), we have 
\begin{equation*}
\langle \varphi ,u\left( z\right) \rangle =\left\{ 
\begin{tabular}{l}
$\int\limits_{0}^{\infty }e^{-zt}{\varphi _{x}\left( t\right) dt\text{, \ }%
\func{Re}z>0\text{;}}$ \\ 
${-}\int\limits_{-\infty }^{0}e^{-zt}{\varphi _{x}\left( t\right) dt\text{,
\ }\func{Re}z<0}$.%
\end{tabular}%
\right.
\end{equation*}%
This shows that the function $z\rightarrow \langle \varphi ,u\left( z\right)
\rangle $ is the Carleman transform of ${\varphi _{x}.}$ It follows that 
\begin{equation*}
i\text{sp}_{B}\left\{ \varphi _{x}\right\} \subseteq \sigma _{T}\left(
x\right) \text{, \ }\forall \varphi \in X^{\ast },
\end{equation*}%
and so 
\begin{equation*}
\overline{\bigcup_{\varphi \in X^{\ast }}\text{sp}_{B}\left\{ \varphi
_{x}\right\} }\subseteq -i\sigma _{T}\left( x\right) .
\end{equation*}%
To show the reverse inclusion, assume that $\lambda _{0}\in 
\mathbb{R}
$ and 
\begin{equation*}
\lambda _{0}\notin \overline{\bigcup_{\varphi \in X^{\ast }}\text{sp}%
_{B}\left\{ \varphi _{x}\right\} }\text{.}
\end{equation*}%
Then, there exist a neighborhood $U$ of $\overline{\bigcup_{\varphi \in
X^{\ast }}\text{sp}_{B}\left\{ \varphi _{x}\right\} }$ and $\varepsilon >0$
such that $\left( \lambda _{0}-\varepsilon ,\lambda _{0}+\varepsilon \right)
\cap \overline{U}=\emptyset .$ Since the algebra $L_{\omega }^{1}\left( 
\mathbb{R}
\right) $ is regular, there exists a function $f\in L_{\omega }^{1}\left( 
\mathbb{R}
\right) $ such that $\widehat{f}=1$ on $\left[ \lambda _{0}-\varepsilon
/2,\lambda _{0}+\varepsilon /2\right] $ and $\widehat{f}=0$ on $\overline{U}%
. $ Notice that $\widehat{f}$ vanishes in a neighborhood of sp$_{B}\left\{
\varphi _{x}\right\} $ and supp$\widehat{f}\subseteq \left[ \lambda
_{0}-\varepsilon ,\lambda _{0}+\varepsilon \right] .$ Consequently, $f$
belongs to the smallest ideal of $L_{\omega }^{1}\left( 
\mathbb{R}
\right) $ whose hull is sp$_{B}\left\{ \varphi _{x}\right\} .$ Since 
\begin{equation*}
\text{sp}_{B}\left\{ \varphi _{x}\right\} =\text{hull}\left\{ f\in L_{\omega
}^{1}\left( 
\mathbb{R}
\right) :f\ast \varphi _{x}=0\right\} ,
\end{equation*}%
we have $\left( \lambda _{0}-\varepsilon /2,\lambda _{0}+\varepsilon
/2\right) \subset 
\mathbb{R}
\setminus $sp$_{B}\left\{ \varphi _{x}\right\} $ for every $\varphi \in
X^{\ast }.$ It follows that the function $z\rightarrow \langle \varphi
,u\left( z\right) \rangle $ can be analytically extended to $i\left( \lambda
_{0}-\varepsilon /2,\lambda _{0}+\varepsilon /2\right) $ for every $\varphi
\in X^{\ast }.$ Hence, $u\left( z\right) $ can be analytically extended to $%
i\left( \lambda _{0}-\varepsilon /2,\lambda _{0}+\varepsilon /2\right) $ and
therefore, $i\lambda _{0}\notin \sigma _{T}\left( x\right) $ or $\lambda
_{0}\notin -i\sigma _{T}\left( x\right) .$ Thus we have 
\begin{equation*}
\overline{\bigcup_{\varphi \in X^{\ast }}\text{sp}_{B}\left\{ \varphi
_{x}\right\} }=-i\sigma _{T}\left( x\right) .
\end{equation*}%
Further, it is easy to check that 
\begin{equation*}
I_{x}=\bigcap\limits_{\varphi \in X^{\ast }}\mathcal{I}_{\left\{ \varphi
_{x}^{\vee }\right\} },
\end{equation*}%
where%
\begin{equation*}
\mathcal{I}_{\left\{ \varphi _{x}^{\vee }\right\} }:=\left\{ f\in L_{\omega
}^{1}\left( 
\mathbb{R}
\right) :f\ast \varphi _{x}^{\vee }=0\right\} .
\end{equation*}%
Taking into account that 
\begin{equation*}
\text{sp}_{B}\left\{ \varphi _{x}^{\vee }\right\} =\left\{ -\lambda :\lambda
\in \text{sp}_{B}\left\{ \varphi _{x}\right\} \right\} ,
\end{equation*}%
we can write 
\begin{equation*}
\text{hull}\left( I_{x}\right) =\overline{\bigcup_{\varphi \in X^{\ast }}%
\text{hull}\left( \mathcal{I}_{\left\{ \varphi _{x}^{\vee }\right\} }\right) 
}=\overline{\bigcup_{\varphi \in X^{\ast }}\text{sp}_{B}\left\{ \varphi
_{x}^{\vee }\right\} }=i\sigma _{T}\left( x\right) .
\end{equation*}

Thus we have the following:

\begin{proposition}
Let $\omega $ be a regular weight. Assume that $T\in B\left( X\right) $ has
SVEP and $x\in X$ satisfies the condition $\left\Vert e^{tT}x\right\Vert
\leq C\omega \left( t\right) $ for all $t\in 
\mathbb{R}
$ and for some $C>0.$ Then,%
\begin{equation*}
i\sigma _{T}\left( x\right) =\mathnormal{hull}\left( I_{x}\right) .
\end{equation*}
\end{proposition}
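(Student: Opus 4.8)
The plan is to read both sides of the asserted equality off the analytic behaviour of the vector-valued function $u(z)$ constructed in (2.2), using the scalarizations $\varphi_x(t)=\langle\varphi,e^{tT}x\rangle$ as the bridge to the Beurling-spectrum machinery. The starting point is what has already been established: since $T$ has SVEP and $\sigma_T(x)\subset i\mathbb{R}$ by Proposition 2.3, a point $i\lambda$ (with $\lambda\in\mathbb{R}$) lies in $\sigma_T(x)$ precisely when $u$ admits no analytic extension to a neighbourhood of $i\lambda$. On the other hand, for each $\varphi\in X^{\ast}$ the scalar function $z\mapsto\langle\varphi,u(z)\rangle$ is exactly the Carleman transform of $\varphi_x\in L_{\omega}^{\infty}(\mathbb{R})$, so by the Carleman characterization of the Beurling spectrum, $\lambda\in\mathrm{sp}_{B}\{\varphi_x\}$ if and only if $\langle\varphi,u\rangle$ fails to extend analytically across $i\lambda$.

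First I would establish the spectral identity $\overline{\bigcup_{\varphi\in X^{\ast}}\mathrm{sp}_{B}\{\varphi_x\}}=-i\sigma_T(x)$. The inclusion $\subseteq$ is immediate: if $\langle\varphi,u\rangle$ has no analytic extension near $i\lambda$ then neither does $u$, so $i\lambda\in\sigma_T(x)$, that is $\lambda\in-i\sigma_T(x)$; since the latter set is compact, the closed union is contained in it. For the reverse inclusion I would argue contrapositively: if $\lambda_{0}$ lies outside the closed union, Shilov regularity of $L_{\omega}^{1}(\mathbb{R})$ produces $f$ with $\widehat{f}=1$ near $\lambda_{0}$ and $\widehat{f}=0$ on a full neighbourhood of that union, forcing every $\langle\varphi,u\rangle$ to extend analytically across the \emph{fixed} interval $i(\lambda_{0}-\varepsilon/2,\lambda_{0}+\varepsilon/2)$; because the extension region is independent of $\varphi$, one upgrades this family of scalar continuations to a genuine $X$-valued continuation of $u$, whence $i\lambda_{0}\notin\sigma_T(x)$.

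Next I would identify the ideal side. The key computation is the identity $(f\ast\varphi_x^{\vee})(s)=\langle\varphi,e^{-sT}x_{f}\rangle$, obtained by unwinding the convolution and pulling $\varphi$ and the semigroup through the integral. This shows at once that the conditions ``$f\ast\varphi_x^{\vee}=0$ for every $\varphi$'' and ``$x_{f}=0$'' are equivalent (take $s=0$ and apply Hahn--Banach), giving $I_{x}=\bigcap_{\varphi\in X^{\ast}}\mathcal{I}_{\{\varphi_x^{\vee}\}}$. Then, applying the hull-of-intersection formula $\mathrm{hull}\bigl(\bigcap_{\varphi}\mathcal{I}_{\{\varphi_x^{\vee}\}}\bigr)=\overline{\bigcup_{\varphi}\mathrm{hull}(\mathcal{I}_{\{\varphi_x^{\vee}\}})}$, valid in the regular Tauberian algebra $L_{\omega}^{1}(\mathbb{R})$, together with $\mathrm{hull}(\mathcal{I}_{\{\varphi_x^{\vee}\}})=\mathrm{sp}_{B}\{\varphi_x^{\vee}\}=-\mathrm{sp}_{B}\{\varphi_x\}$, I would chain
\[
\mathrm{hull}(I_{x})=\overline{\bigcup_{\varphi}\mathrm{sp}_{B}\{\varphi_x^{\vee}\}}=-\overline{\bigcup_{\varphi}\mathrm{sp}_{B}\{\varphi_x\}}=-\bigl(-i\sigma_T(x)\bigr)=i\sigma_T(x),
\]
which is the claim.

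The main obstacle I anticipate is the reverse inclusion in the spectral identity, namely promoting the family of scalar analytic continuations into a single vector-valued continuation of $u$. The delicate points are that the localizing function from Shilov regularity must vanish on a full neighbourhood of the closed union while equalling $1$ near $\lambda_{0}$, so that the extension interval is genuinely uniform in $\varphi$, and that one must then exhibit, for each point of the gap, an element of $X$ whose pairing with every $\varphi$ reproduces the scalar extension (e.g.\ via a Cauchy-integral representation over a contour avoiding $i\mathbb{R}$, followed by the passage from weak to strong analyticity). A secondary technical point is the hull-of-intersection formula, whose nontrivial inclusion rests on the regularity of $L_{\omega}^{1}(\mathbb{R})$.
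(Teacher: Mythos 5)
Your proposal follows essentially the same route as the paper: the identity $\overline{\bigcup_{\varphi\in X^{\ast}}\mathrm{sp}_{B}\{\varphi_{x}\}}=-i\sigma_{T}(x)$ via the Carleman transform of $\varphi_{x}$, the identification $I_{x}=\bigcap_{\varphi}\mathcal{I}_{\{\varphi_{x}^{\vee}\}}$, and the hull-of-intersection formula followed by the sign flip. The steps you flag as delicate (the extension interval being uniform in $\varphi$ and the weak-to-strong upgrade of the analytic continuation of $u$) are precisely the ones the paper also treats briefly, so your argument is correct and matches the paper's proof.
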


For $f\in L_{\omega }^{1}\left( 
\mathbb{R}
\right) $ and $s\in 
\mathbb{R}
,$ let $f_{s}\left( t\right) :=f\left( t-s\right) .$ Let $e_{n}:=2n\chi _{%
\left[ -1/n,1/n\right] }$ $\left( n\in 
\mathbb{N}
\right) $, where $\chi _{\left[ -1/n,1/n\right] }$ is the characteristic
function of the interval $\left[ -1/n,1/n\right] .$ If $K:=\sup_{t\in \left[
-1,1\right] }\omega \left( t\right) ,$ then $\left\Vert e_{n}\right\Vert
_{\omega }\leq K$ for all $n\in 
\mathbb{N}
.$ On the other hand, by continuity of the mapping $s\mapsto f_{s}$, we have 
\begin{equation*}
\lim_{n\rightarrow \infty }\left\Vert f\ast e_{n}-f\right\Vert _{1,\omega
}=0.
\end{equation*}%
Consequently, $\left\{ e_{n}\right\} $ is a bounded approximate identity
(b.a.i.) for $L_{\omega }^{1}\left( 
\mathbb{R}
\right) .$ If $x\in E_{T}^{\omega },$ then from the identity 
\begin{equation*}
x_{e_{n}}-x=\int_{-1/n}^{1/n}e_{n}\left( t\right) \left( e^{tT}x-x\right) dt,
\end{equation*}%
it follows that $x_{e_{n}}\rightarrow x.$ Similarly, $x_{f\ast
e_{n}}\rightarrow x_{f}$ for all $f\in L_{\omega }^{1}\left( 
\mathbb{R}
\right) .$

\begin{proposition}
Let $\omega $ be a regular weight and $x\in X.$ Assume that $T\in B\left(
X\right) $ has SVEP and $\left\Vert e^{tT}x\right\Vert \leq C\omega \left(
t\right) $ for all $t\in 
\mathbb{R}
$ and for some $C>0.$ For an arbitrary $f\in L_{\omega }^{1}\left( 
\mathbb{R}
\right) $, the following assertions hold:

$a)$ If $x_{f}=0,$ then $\widehat{f}$ vanishes on $i\sigma _{T}\left(
x\right) .$

$b)$ If $\widehat{f}$ vanishes in a neighborhood of $i\sigma _{T}\left(
x\right) ,$ then $x_{f}=0$.

$c)$ If $\widehat{f}=1$ in a neighborhood of $i\sigma _{T}\left( x\right) ,$
then $x_{f}=x$.
\end{proposition}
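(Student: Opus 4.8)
The plan is to read all three assertions off Proposition 2.4, which identifies $\mathrm{hull}(I_x)$ with $S:=i\sigma_T(x)$, together with the fact (established just before that proposition) that $I_x=\{f:x_f=0\}$ is a \emph{closed} ideal of $L^1_\omega(\mathbb{R})$. Since $J_\omega(S)$ and $I_\omega(S)$ are respectively the smallest and the largest closed ideals with hull $S$, we have the chain $J_\omega(S)\subseteq I_x\subseteq I_\omega(S)$, and each part will follow from one end of this chain (with part (c) reduced to part (b)).

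Part (a) is immediate: the hypothesis $x_f=0$ says exactly $f\in I_x\subseteq I_\omega(S)$, and by the definition of $I_\omega(S)$ this forces $\widehat{f}$ to vanish on $S=i\sigma_T(x)$.

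The substance is in part (b), where I would prove $f\in J_\omega(S)$, so that $x_f=0$ follows from $J_\omega(S)\subseteq I_x$. First note that $\sigma_T(x)$ is compact, being a closed subset of $\sigma(T)$, hence so is $S$; let $V$ be the open neighborhood of $S$ on which $\widehat{f}$ vanishes. Using regularity of $L^1_\omega(\mathbb{R})$, choose $\psi\in L^1_\omega(\mathbb{R})$ with $\widehat{\psi}=1$ on a neighborhood of $S$ and $\mathrm{supp}\,\widehat{\psi}\subseteq V$. Because $\widehat{\psi}$ is supported where $\widehat{f}=0$, we get $\widehat{f}\,\widehat{\psi}\equiv 0$, and injectivity of the Fourier transform gives $f\ast\psi=0$, so that $f=f-f\ast\psi$. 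Next, by the Tauberian property pick $\phi_m\to f$ in $L^1_\omega(\mathbb{R})$ with each $\widehat{\phi_m}$ compactly supported. Then $\widehat{\phi_m-\phi_m\ast\psi}=\widehat{\phi_m}\,(1-\widehat{\psi})$ has compact support and vanishes on the neighborhood of $S$ where $\widehat{\psi}=1$, so each $\phi_m-\phi_m\ast\psi$ is one of the generators of $J_\omega(S)$; since $\phi_m-\phi_m\ast\psi\to f-f\ast\psi=f$ and $J_\omega(S)$ is closed, we conclude $f\in J_\omega(S)$.

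Part (c) then follows from (b) via the bounded approximate identity $\{e_n\}$. Since $\widehat{f}=1$ near $S$, the transform $\widehat{e_n-f\ast e_n}=\widehat{e_n}\,(1-\widehat{f})$ vanishes in a neighborhood of $S$, so part (b) yields $x_{e_n-f\ast e_n}=0$, i.e. $x_{e_n}=x_{f\ast e_n}$. Letting $n\to\infty$ and using $x_{e_n}\to x$ together with $x_{f\ast e_n}\to x_f$ (both recorded in the discussion of the approximate identity) gives $x=x_f$. The main obstacle is the approximation in (b): representing $f$ as a limit of functions whose transforms are compactly supported \emph{and} vanish near $S$. The decomposition $f=(f-f\ast\psi)+f\ast\psi$ with $f\ast\psi=0$ is precisely what makes this succeed using only regularity and the Tauberian property, without needing $S$ itself to be a set of synthesis.
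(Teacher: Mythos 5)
Your proof is correct, and parts (a) and (c) coincide with the paper's own argument: (a) is the inclusion $I_x\subseteq I_\omega(i\sigma_T(x))$ from Proposition 2.4, and (c) is exactly the reduction to (b) via $\widehat{e_n}(1-\widehat{f})$ and the bounded approximate identity. Where you diverge is part (b). The paper never shows that $f$ itself lies in $J_\omega(S)$; it only observes that $f\ast g\in J_\omega(S)\subseteq I_x$ for every $g$ with compactly supported transform (since $\widehat{f}\,\widehat{g}$ then has compact support disjoint from $S$), deduces $x_{f\ast g}=0$ for all $g$ by the Tauberian property and continuity of $g\mapsto x_{f\ast g}$, and finally lets $g=e_n\to\delta_0$. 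You instead prove the stronger statement $f\in J_\omega(S)$ outright, via the local unit $\psi$ with $\widehat{\psi}=1$ near $S$ and $\operatorname{supp}\widehat{\psi}$ inside the zero set of $\widehat{f}$, the identity $f=f-f\ast\psi$, and the approximation $\phi_m-\phi_m\ast\psi\to f$. Both routes rest on the same two pillars ($\mathrm{hull}(I_x)=S$ together with the minimality of $J_\omega(S)$, and the Tauberian property); yours costs an extra appeal to regularity to manufacture $\psi$ for the compact set $S$, but buys the cleaner intermediate conclusion $f\in J_\omega(S)$, which the paper's smoothing argument deliberately sidesteps. Either way the logic is sound and all the ingredients you invoke are available in the paper's setup.
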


\begin{proof}
a) By Proposition 2.4, $i\sigma _{T}\left( x\right) =$hull$\left(
I_{x}\right) $ and therefore $I_{x}\subseteq I_{\omega }\left( i\sigma
_{T}\left( x\right) \right) .$ This clearly implies a).

b) Let $g\in L_{\omega }^{1}\left( 
\mathbb{R}
\right) $ be such that supp$\widehat{g}$ is compact. Then, $f\ast g\in
J_{\omega }\left( i\sigma _{T}\left( x\right) \right) $ and therefore, $%
f\ast g\in I_{x}.$ So we have $x_{f\ast g}=0.$ Since the algebra $L_{\omega
}^{1}\left( 
\mathbb{R}
\right) $ is Tauberian, $x_{f\ast g}=0$ for all $g\in L_{\omega }^{1}\left( 
\mathbb{R}
\right) .$ It follows that $x_{f\ast e_{n}}=0$ for all $n,$ where $\left\{
e_{n}\right\} $ be a b.a.i. for $L_{\omega }^{1}\left( 
\mathbb{R}
\right) .$ As $n\rightarrow \infty ,$ we have $x_{f}=0.$

c) Since the Fourier transform of $f\ast e_{n}-e_{n}$ vanishes in a
neighborhood of $i\sigma _{T}\left( x\right) ,$ by b), $x_{f\ast
e_{n}}=x_{e_{n}}$. As $n\rightarrow \infty ,$ we have $x_{f}=x.$
\end{proof}

By $S\left( 
\mathbb{R}
\right) ,$ we denote the set of all rapidly decreasing functions on $%
\mathbb{R}
,$ i.e. the set of all infinitely differentiable functions $\phi $ on $%
\mathbb{R}
$ such that 
\begin{equation*}
\lim_{\left\vert t\right\vert \rightarrow \infty }\left\vert t^{n}\phi
^{\left( k\right) }\left( t\right) \right\vert =0\text{, \ }\forall
n,k=0,1,2,....
\end{equation*}%
(in this definition, $n$ can be replaced by any $\alpha \geq 0$)$.$ It can
be seen that if $\omega $ is a polynomial weight, then $S\left( 
\mathbb{R}
\right) \subset L_{\omega }^{1}\left( 
\mathbb{R}
\right) .$

\begin{lemma}
Assume that $T\in B\left( X\right) $ and $x\in X$ satisfy the condition $%
\left\Vert e^{tT}x\right\Vert \leq C\left( 1+\left\vert t\right\vert \right)
^{\alpha }$ $\left( \alpha \geq 0\right) $ for all $t\in 
\mathbb{R}
$ and for some $C>0.$ Then, for an arbitrary $\phi \in S\left( 
\mathbb{R}
\right) ,$ we have%
\begin{equation*}
x_{\phi ^{\left( k\right) }}=\left( -1\right) ^{k}T^{k}x_{\phi }\text{, \ }%
\forall k\in 
\mathbb{N}
.
\end{equation*}
\end{lemma}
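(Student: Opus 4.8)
The plan is to prove the case $k=1$ by a single integration by parts and then to iterate. First I would record the two calculus facts that make the computation legitimate. Since $t\mapsto e^{tT}$ is a norm-analytic one-parameter group, the $X$-valued function $t\mapsto e^{tT}x$ is differentiable with $\frac{d}{dt}e^{tT}x=Te^{tT}x$. Moreover, because $\phi\in S\left(\mathbb{R}\right)$ and $\omega\left(t\right)=\left(1+\left\vert t\right\vert\right)^{\alpha}$ is a polynomial weight, every derivative $\phi^{\left(k\right)}$ again lies in $S\left(\mathbb{R}\right)\subset L_{\omega}^{1}\left(\mathbb{R}\right)$, so each $x_{\phi^{\left(k\right)}}$ is a well-defined ($X$-valued Bochner) integral.

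For $k=1$, integration by parts in the Bochner integral gives
\begin{equation*}
x_{\phi'}=\int_{\mathbb{R}}\phi'\left(t\right)e^{tT}x\,dt=\Big[\phi\left(t\right)e^{tT}x\Big]_{-\infty}^{\infty}-\int_{\mathbb{R}}\phi\left(t\right)Te^{tT}x\,dt.
\end{equation*}
The boundary term vanishes: from the hypothesis $\left\Vert\phi\left(t\right)e^{tT}x\right\Vert\le C\left\vert\phi\left(t\right)\right\vert\left(1+\left\vert t\right\vert\right)^{\alpha}$, and since $\phi$ decays faster than any polynomial while $e^{tT}x$ grows at most polynomially, $\phi\left(t\right)e^{tT}x\to 0$ as $\left\vert t\right\vert\to\infty$. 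Because $T$ is bounded it commutes with the integral, so the remaining term equals $T\int_{\mathbb{R}}\phi\left(t\right)e^{tT}x\,dt=Tx_{\phi}$. Hence $x_{\phi'}=-Tx_{\phi}$, which is the asserted identity for $k=1$.

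For the inductive step I would apply the $k=1$ identity to $\phi^{\left(k-1\right)}\in S\left(\mathbb{R}\right)$ in place of $\phi$, obtaining $x_{\phi^{\left(k\right)}}=x_{\left(\phi^{\left(k-1\right)}\right)'}=-Tx_{\phi^{\left(k-1\right)}}$; combining this with the induction hypothesis $x_{\phi^{\left(k-1\right)}}=\left(-1\right)^{k-1}T^{k-1}x_{\phi}$ yields $x_{\phi^{\left(k\right)}}=\left(-1\right)^{k}T^{k}x_{\phi}$, completing the induction.

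The only delicate points are really bookkeeping rather than a genuine obstacle: justifying the calculus identities for the $X$-valued integrals, namely that $e^{tT}x$ is differentiable with the stated derivative, that integration by parts is valid for the improper Bochner integral, and that the bounded operator $T$ may be pulled outside the integral. The conceptual heart of the argument is the vanishing of the boundary term, which is exactly the place where the polynomial growth bound on $e^{tT}x$ meets the rapid decay of the Schwartz function $\phi$.
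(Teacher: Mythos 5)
Your proposal is correct and follows essentially the same route as the paper: integration by parts, vanishing of the boundary term because the Schwartz decay of $\phi$ dominates the polynomial growth of $\left\Vert e^{tT}x\right\Vert$, and induction on $k$. The only cosmetic difference is that the paper performs the integration by parts on a finite interval $\left[ a,b\right]$ and then lets $a\rightarrow -\infty$, $b\rightarrow +\infty$, which is precisely the rigorous version of your boundary-term computation at $\pm \infty$.
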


\begin{proof}
For an arbitrary $a,b\in 
\mathbb{R}
$ $\left( a<b\right) ,$ we can write 
\begin{equation*}
\int_{a}^{b}\phi ^{\prime }\left( t\right) e^{tT}xdt=\phi \left( b\right)
e^{bT}x-\phi \left( a\right) e^{aT}x-T\int_{a}^{b}\phi \left( t\right)
e^{tT}xdt.
\end{equation*}%
On the other hand, 
\begin{eqnarray*}
\left\Vert \phi \left( b\right) e^{bT}x-\phi \left( a\right)
e^{aT}x\right\Vert &\leq &C\left\vert \phi \left( b\right) \right\vert
\left( 1+\left\vert b\right\vert \right) ^{\alpha }+C\left\vert \phi \left(
a\right) \right\vert \left( 1+\left\vert a\right\vert \right) ^{\alpha } \\
&\leq &2^{\alpha }C\left( \left\vert \phi \left( b\right) \right\vert
+\left\vert \phi \left( b\right) \right\vert \left\vert b\right\vert
^{\alpha }+\left\vert \phi \left( a\right) \right\vert +\left\vert \phi
\left( a\right) \right\vert \left\vert a\right\vert ^{\alpha }\right) .
\end{eqnarray*}%
Since $\phi \in S\left( 
\mathbb{R}
\right) ,$ it follows that 
\begin{equation*}
\underset{a\rightarrow -\infty }{\lim_{b\rightarrow +\infty }}\left\Vert
\phi \left( b\right) e^{bT}x-\phi \left( a\right) e^{aT}x\right\Vert =0.
\end{equation*}%
Hence $x_{\phi ^{\prime }}=-Tx_{\phi }.$ By induction we obtain our result.
\end{proof}

Next, we have the following:

\begin{proposition}
Let $\omega \left( t\right) =\left( 1+\left\vert t\right\vert \right)
^{\alpha }$ $\left( \alpha \geq 0\right) .$ Assume that $T\in B\left(
X\right) $ has SVEP\ and $x\in X$ satisfies the condition $\left\Vert
e^{tT}x\right\Vert \leq C\omega \left( t\right) $ for all $t\in 
\mathbb{R}
$ and for some $C>0.$ If $\sigma _{T}\left( x\right) =\left\{ 0\right\} $,
then for an arbitrary $f\in L_{\omega }^{1}\left( 
\mathbb{R}
\right) ,$ we have%
\begin{equation*}
x_{f}=\widehat{f}\left( 0\right) x+\frac{\widehat{f}^{\prime }\left(
0\right) }{1!}\left( iT\right) x+...+\frac{\widehat{f}^{\left( k\right)
}\left( 0\right) }{k!}\left( iT\right) ^{k}x,
\end{equation*}%
where $k=[\alpha ].$ In particular, we have $T^{k+1}x=0.$
\end{proposition}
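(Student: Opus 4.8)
The plan is to isolate the single hard fact $T^{k+1}x=0$ (where $k=[\alpha]$) and then obtain the stated formula by a one-line integration. Once $T^{k+1}x=0$ is known, the exponential terminates, $e^{tT}x=\sum_{j=0}^{k}\frac{t^{j}}{j!}T^{j}x$, so that
\[
x_{f}=\int_{\mathbb{R}}f(t)e^{tT}x\,dt=\sum_{j=0}^{k}\frac{T^{j}x}{j!}\int_{\mathbb{R}}t^{j}f(t)\,dt .
\]
These moments are legitimate because $|t|^{j}\le(1+|t|)^{\alpha}=\omega(t)$ for $j\le k\le\alpha$, so $t\mapsto t^{j}f(t)$ is integrable for every $f\in L^{1}_{\omega}(\mathbb{R})$. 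Since $\widehat{f}^{(j)}(0)=(-i)^{j}\int_{\mathbb{R}}t^{j}f(t)\,dt$ and $(-i)^{-1}=i$, each moment equals $\int_{\mathbb{R}}t^{j}f(t)\,dt=i^{j}\widehat{f}^{(j)}(0)$, and substituting turns the display into $x_{f}=\sum_{j=0}^{k}\frac{\widehat{f}^{(j)}(0)}{j!}(iT)^{j}x$. Thus the entire content of the proposition is the vanishing $T^{k+1}x=0$.

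To reach that vanishing I would fix a cut-off $\chi\in C_{c}^{\infty}(\mathbb{R})$ with $\chi\equiv1$ near $0$ and take $\phi_{0}\in S(\mathbb{R})$ with $\widehat{\phi_{0}}=\chi$ (the inverse Fourier transform of $\chi$ is Schwartz, hence lies in $L^{1}_{\omega}(\mathbb{R})$). Because $\widehat{\phi_{0}}=1$ in a neighborhood of $i\sigma_{T}(x)=\{0\}$, Proposition 2.5(c) gives $x_{\phi_{0}}=x$. Differentiating $k+1$ times and applying Lemma 2.6 then yields
\[
x_{\phi_{0}^{(k+1)}}=(-1)^{k+1}T^{k+1}x_{\phi_{0}}=(-1)^{k+1}T^{k+1}x ,
\]
so it suffices to prove $x_{\phi_{0}^{(k+1)}}=0$. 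The relevant feature is that $\widehat{\phi_{0}^{(k+1)}}(\lambda)=(i\lambda)^{k+1}\chi(\lambda)$ has compact support and vanishes to order $k+1$ at the single spectral point $0$, though not on a whole neighborhood of it.

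This last point is the main obstacle. Proposition 2.5(b) only kills functions whose transform vanishes on a neighborhood of $0$, whereas here the transform merely vanishes to finite order at $0$; what is needed is the sharper spectral-synthesis statement that every $h\in L^{1}_{\omega}(\mathbb{R})$ with $\widehat{h}^{(j)}(0)=0$ for $0\le j\le[\alpha]$ already lies in the annihilator ideal $I_{x}=\{f:x_{f}=0\}$. Equivalently, the smallest closed ideal with hull $\{0\}$ must be $J_{\omega}(\{0\})=\{h\in L^{1}_{\omega}(\mathbb{R}):\widehat{h}^{(j)}(0)=0,\ 0\le j\le[\alpha]\}$. By Proposition 2.4, $I_{x}$ is a closed ideal with $\mathrm{hull}(I_{x})=\{0\}$, hence contains $J_{\omega}(\{0\})$, and this identification would immediately give $x_{\phi_{0}^{(k+1)}}=0$. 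For $0\le\alpha<1$ (so $k=0$) this is exactly the quoted fact that each point is a set of synthesis for $L^{1}_{\omega}(\mathbb{R})$, and the proof is then complete; for general $\alpha$ one needs the corresponding description of the closed primary ideals at a point of the polynomial-weight Beurling algebra, and that is the genuinely difficult ingredient.

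I would therefore concentrate on this synthesis step through its dual, function-theoretic form. For each $\varphi\in X^{*}$ the scalar function $\varphi_{x}(t)=\langle\varphi,e^{tT}x\rangle$ lies in $L^{\infty}_{\omega}(\mathbb{R})$, and by the discussion preceding Proposition 2.4 together with $\sigma_{T}(x)=\{0\}$ it satisfies $\mathrm{sp}_{B}\{\varphi_{x}\}\subseteq\{0\}$. The fact to establish is that a function in $L^{\infty}_{\omega}(\mathbb{R})$ with one-point Beurling spectrum $\{0\}$ and growth $(1+|t|)^{\alpha}$ must be a polynomial of degree $\le[\alpha]$; running this through all $\varphi\in X^{*}$ shows that $e^{tT}x$ is a vector polynomial of degree $\le k$, which is precisely $T^{k+1}x=0$. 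To prove this one-point-spectrum theorem I would analyze the Carleman transform of $\varphi_{x}$, which is analytic on $\mathbb{C}\setminus\{0\}$, and use the growth bound to force the singularity at $0$ to be a pole of order $\le k+1$; an alternative is an induction on $[\alpha]$ that peels off one degree at a time and reduces to the cited case $\alpha<1$. The delicate part throughout is that the hypothesis controls $\varphi_{x}$ only in the real variable $t$, so the behavior of the Carleman transform near the imaginary axis cannot be estimated by a crude Laurent expansion and must instead be handled through the Beurling-spectrum/synthesis machinery.
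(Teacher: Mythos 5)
Your overall architecture is sound, and your finishing move is genuinely different from the paper's: you first isolate $T^{k+1}x=0$ and then recover the displayed formula by expanding the (now terminating) series $e^{tT}x=\sum_{j=0}^{k}\frac{t^{j}}{j!}T^{j}x$ and computing moments, using $|t|^{j}\le (1+|t|)^{\alpha}$ for $j\le[\alpha]$ to justify $\widehat{f}^{(j)}(0)=(-i)^{j}\int t^{j}f(t)\,dt$. That is correct and arguably more elementary than the paper's route, which instead proves the formula for general $f$ first, by subtracting from $f$ the jet $\sum_{j}\frac{\widehat{f}^{(j)}(0)}{j!}(-i)^{j}\phi^{(j)}$ and showing the remainder lies in $J_{\omega}(\{0\})\subseteq I_{x}$, and only then specializes to $f=\phi^{(k+1)}$ to get $T^{k+1}x=0$.

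The problem is the step you yourself flag as ``the genuinely difficult ingredient.'' Everything hinges on the identity
\begin{equation*}
J_{\omega}\left(\{0\}\right)=\left\{h\in L_{\omega}^{1}(\mathbb{R}):\widehat{h}(0)=\widehat{h}^{\prime}(0)=\dots=\widehat{h}^{([\alpha])}(0)=0\right\},
\end{equation*}
i.e.\ the description of the smallest closed primary ideal at a point of the polynomial-weight Beurling algebra; combined with $\mathrm{hull}(I_{x})=\{0\}$ from Proposition 2.4 this gives $\phi_{0}^{(k+1)}\in J_{\omega}(\{0\})\subseteq I_{x}$ and hence $T^{k+1}x=0$. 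You do not prove this identity: your Carleman-transform sketch (forcing a pole of order $\le k+1$ at the origin from real-axis growth) and the proposed induction on $[\alpha]$ are left as programs, and you correctly note the obstruction that the growth hypothesis does not directly control the transform near the imaginary axis. So, as written, the proposal has a gap exactly at the crux. The paper does not prove this fact either --- it invokes it as known, citing Gelfand--Raikov--Shilov \cite[Ch.VI, \S 41]{6} and Zarrabi \cite[Theorem 3.2]{19}. Had you cited (rather than attempted to reprove) that primary-ideal theorem, your argument would close completely and would constitute a slightly cleaner variant of the paper's proof.
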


\begin{proof}
We know (for instance, see \cite[Ch.VI, \S 41]{6} and \cite[Theorem 3.2]{19}%
) that if $f\in L_{\omega }^{1}\left( 
\mathbb{R}
\right) ,$ then the first $k$ derivatives of the Fourier transform of $f$
exist and 
\begin{equation*}
J_{\omega }\left( \left\{ 0\right\} \right) =\left\{ f\in L_{\omega
}^{1}\left( 
\mathbb{R}
\right) :\widehat{f}\left( 0\right) =\widehat{f}^{\prime }\left( 0\right)
=...=\widehat{f}^{\left( k\right) }\left( 0\right) =0\right\} ,
\end{equation*}%
where $k=\left[ \alpha \right] $. Recall that $J_{\omega }\left( \left\{
0\right\} \right) $ is the smallest closed ideal of $L_{\omega }^{1}\left( 
\mathbb{R}
\right) $ whose hull is $\left\{ 0\right\} .$ On the other hand, by
Proposition 2.4, hull$\left( I_{x}\right) =\left\{ 0\right\} .$ Hence we
have $J_{\omega }\left( \left\{ 0\right\} \right) \subseteq I_{x}$.

Let $\phi \in S\left( 
\mathbb{R}
\right) $ be such that $\widehat{\phi }\left( \lambda \right) =1$ in a
neighborhood of $0.$ For a given $f\in L_{\omega }^{1}\left( 
\mathbb{R}
\right) ,$ consider the function 
\begin{equation*}
g:=f-\widehat{f}\left( 0\right) \phi -\frac{\widehat{f}^{\prime }\left(
0\right) }{1!}\left( -i\right) \phi ^{\prime }-...-\frac{\widehat{f}^{\left(
k\right) }\left( 0\right) }{k!}\left( -i\right) ^{k}\phi ^{\left( k\right) }.
\end{equation*}%
As%
\begin{equation*}
\widehat{\phi ^{\left( k\right) }}\left( \lambda \right) =\left( i\lambda
\right) ^{k}\widehat{\phi }\left( \lambda \right) ,
\end{equation*}%
we have%
\begin{equation*}
\widehat{g}\left( \lambda \right) =\widehat{f}\left( \lambda \right) -\left[ 
\widehat{f}\left( 0\right) +\frac{\widehat{f}^{\prime }\left( 0\right) }{1!}%
\lambda +...+\frac{\widehat{f}^{\left( k\right) }\left( 0\right) }{k!}%
\lambda ^{k}\right] \widehat{\phi }\left( \lambda \right) .
\end{equation*}%
It can be seen that the first $k$ derivatives of $\widehat{g}$ at $0$ are
zero and therefore $g\in J_{\omega }\left( \left\{ 0\right\} \right) .$
Consequently, $g\in I_{x}$ and so $x_{g}=0.$ On the other hand, by Lemma 2.6
and Proposition 2.5, 
\begin{equation*}
x_{\phi ^{\left( k\right) }}=\left( -1\right) ^{k}T^{k}x
\end{equation*}%
which implies%
\begin{equation*}
x_{g}=x_{f}-\widehat{f}\left( 0\right) x-\frac{\widehat{f}^{\prime }\left(
0\right) }{1!}\left( iT\right) x-...-\frac{\widehat{f}^{\left( k\right)
}\left( 0\right) }{k!}\left( iT\right) ^{k}x.
\end{equation*}%
Hence, 
\begin{equation*}
x_{f}=\widehat{f}\left( 0\right) x+\frac{\widehat{f}^{\prime }\left(
0\right) }{1!}\left( iT\right) x+...+\frac{\widehat{f}^{\left( k\right)
}\left( 0\right) }{k!}\left( iT\right) ^{k}x.
\end{equation*}%
If $f:=\phi ^{\left( k+1\right) },$ then as 
\begin{equation*}
\widehat{f}\left( 0\right) =\widehat{f}^{\prime }\left( 0\right) =...=%
\widehat{f}^{\left( k\right) }\left( 0\right) =0,
\end{equation*}%
we get 
\begin{equation*}
0=x_{f}=\left( -1\right) ^{k+1}T^{k+1}x.
\end{equation*}
\end{proof}

Now, we are in a position to prove Theorem 2.2.

\begin{proof}[Proof of Theorem 2.2]
If $T\in \mathcal{D}_{A}^{\alpha }\left( 
\mathbb{R}
\right) ,$ then as 
\begin{equation*}
e^{tA}Te^{-tA}=e^{t\Delta _{A}}\left( T\right) ,
\end{equation*}
we have%
\begin{equation*}
\left\Vert e^{t\Delta _{A}}\left( T\right) \right\Vert \leq C_{T}\left(
1+\left\vert t\right\vert \right) ^{\alpha }\text{, \ }\forall t\in 
\mathbb{R}
.
\end{equation*}%
By Proposition 2.3, $\sigma _{\Delta _{A}}\left( T\right) \subset i%
\mathbb{R}
.$ Further, since 
\begin{equation*}
\sigma \left( \Delta _{A}\right) =\left\{ \lambda -\mu :\lambda ,\mu \in
\sigma \left( A\right) \right\}
\end{equation*}%
\cite[Theorem 3.5.1]{9} and $\sigma \left( A\right) \subset 
\mathbb{R}
,$ we have $\sigma \left( \Delta _{A}\right) \subset 
\mathbb{R}
$. Therefore, $\Delta _{A}$ has SVEP. On the other hand, as 
\begin{equation*}
\sigma _{\Delta _{A}}\left( T\right) \subseteq \sigma \left( \Delta
_{A}\right) \subset 
\mathbb{R}
,
\end{equation*}%
we obtain that 
\begin{equation*}
\sigma _{\Delta _{A}}\left( T\right) \subseteq 
\mathbb{R}
\cap i%
\mathbb{R}
=\left\{ 0\right\} .
\end{equation*}%
Since $\Delta _{A}$ has SVEP, $\sigma _{\Delta _{A}}\left( T\right) \neq
\emptyset $, so that $\sigma _{\Delta _{A}}\left( T\right) =\left\{
0\right\} .$ Applying now Proposition 2.7 to the operator $\Delta _{A}$ on
the space $B\left( X\right) ,$ we get 
\begin{equation*}
\Delta _{A}^{[\alpha ]+1}\left( T\right) =0.
\end{equation*}%
For the reverse inclusion, assume that $T\in B\left( X\right) $ satisfies
the equation $\Delta _{A}^{n}\left( T\right) =0$ for some $n\in 
\mathbb{N}
.$ Then, we can write%
\begin{eqnarray*}
\left\Vert e^{tA}Te^{-tA}\right\Vert &=&\left\Vert e^{t\Delta _{A}}\left(
T\right) \right\Vert \\
&=&\left\Vert I+\frac{t\Delta _{A}\left( T\right) }{1!}+...+\frac{%
t^{n-1}\Delta _{A}^{n-1}\left( T\right) }{\left( n-1\right) !}\right\Vert \\
&=&O\left( 1+\left\vert t\right\vert \right) ^{n-1}.
\end{eqnarray*}%
This shows that $T\in \mathcal{D}_{A}^{n-1}\left( 
\mathbb{R}
\right) .$ The proof is complete.
\end{proof}

Next, we will prove Theorem 2.1.

\begin{proof}[Proof of Theorem 2.1]
Assume that $\sigma \left( A\right) =\left\{ \lambda \right\} .$ If $T\in 
\mathcal{D}_{A}^{\alpha }\left( 
\mathbb{R}
\right) ,$ then $T\in \mathcal{D}_{B}^{\alpha }\left( 
\mathbb{R}
\right) ,$ where $B=A-\lambda I.$ Since $\sigma \left( B\right) =\left\{
0\right\} $ and $\Delta _{B}^{n}\left( T\right) =\Delta _{A}^{n}\left(
T\right) $ $\left( \forall n\in 
\mathbb{N}
\right) $, by Theorem 2.2 we obtain our result.
\end{proof}

Note that if $\alpha \geq 1,$ then $\mathcal{D}_{A}^{\alpha }\left( 
\mathbb{R}
\right) \neq \left\{ A\right\} ^{\prime },$ in general. To see this, let $%
A=\left( 
\begin{array}{cc}
0 & 0 \\ 
1 & 0%
\end{array}%
\right) $ and $T=\left( 
\begin{array}{cc}
1 & 0 \\ 
0 & 0%
\end{array}%
\right) $ be two $2\times 2$ matrices on $2-$dimensional Hilbert space. As $%
A^{2}=0,$ we have $\sigma \left( A\right) =\left\{ 0\right\} $ and 
\begin{equation*}
e^{tA}Te^{-tA}=\left( I+tA\right) T\left( I-tA\right) =\left( 
\begin{array}{cc}
1 & 0 \\ 
t & 0%
\end{array}%
\right) \text{, \ }\forall t\in 
\mathbb{R}
.
\end{equation*}%
Since%
\begin{equation*}
\left\Vert e^{tA}Te^{-tA}\right\Vert =\left( 1+\left\vert t\right\vert
^{2}\right) ^{\frac{1}{2}},
\end{equation*}%
we have $T\in \mathcal{D}_{A}^{1}\left( 
\mathbb{R}
\right) ,$ but $AT\neq TA.$

For a given $\alpha \geq 0,$ we define the class $\mathcal{D}_{A}^{\alpha
}\left( 
\mathbb{Z}
\right) $ of all operators $T\in B\left( X\right) $ for which there exists a
constant $C_{T}>0$ such that 
\begin{equation*}
\left\Vert e^{nA}Te^{-nA}\right\Vert \leq C_{T}\left( 1+\left\vert
n\right\vert \right) ^{\alpha }\text{, \ }\forall n\in 
\mathbb{Z}
.
\end{equation*}%
Clearly, $\mathcal{D}_{A}^{\alpha }\left( 
\mathbb{R}
\right) \subseteq \mathcal{D}_{A}^{\alpha }\left( 
\mathbb{Z}
\right) .$ We claim that $\mathcal{D}_{A}^{\alpha }\left( 
\mathbb{R}
\right) =\mathcal{D}_{A}^{\alpha }\left( 
\mathbb{Z}
\right) .$ Indeed, if $T\in \mathcal{D}_{A}^{\alpha }\left( 
\mathbb{Z}
\right) $ and $t\in 
\mathbb{R}
,$ then $t=n+r,$ where $n\in 
\mathbb{Z}
$, $\left\vert r\right\vert <1$ and $\left\vert n\right\vert \leq \left\vert
t\right\vert .$ Consequently, we can write 
\begin{eqnarray*}
\left\Vert e^{tT}x\right\Vert &=&\left\Vert e^{rT}e^{nT}x\right\Vert \\
&\leq &e^{\left\Vert T\right\Vert }\left\Vert e^{nT}x\right\Vert \\
&\leq &C_{T}e^{\left\Vert T\right\Vert }\left( 1+\left\vert n\right\vert
\right) ^{\alpha } \\
&\leq &C_{T}e^{\left\Vert T\right\Vert }\left( 1+\left\vert t\right\vert
\right) ^{\alpha }.
\end{eqnarray*}%
Therefore, in Theorems 2.1 and 2.2, the class $\mathcal{D}_{A}^{\alpha
}\left( 
\mathbb{R}
\right) $ can be replaced by $\mathcal{D}_{A}^{\alpha }\left( 
\mathbb{Z}
\right) .$

As a consequence of Proposition 2.7, we will need the following:

\begin{corollary}
Assume that $T\in B\left( X\right) $ has SVEP\ and $x\in X$ satisfies the
condition 
\begin{equation*}
\left\Vert e^{nT}x\right\Vert \leq C\left( 1+\left\vert n\right\vert \right)
^{\alpha }\text{ }\left( \alpha \geq 0\right) ,
\end{equation*}%
for all $n\in 
\mathbb{Z}
$ and for some $C>0.$ If $\sigma _{T}\left( x\right) =\left\{ 0\right\} $,
then $T^{\left[ \alpha \right] +1}x=0.$
\end{corollary}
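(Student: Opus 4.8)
The plan is to reduce this discrete statement to the continuous Proposition~2.7, which already delivers the conclusion $T^{[\alpha]+1}x=0$ but under the stronger hypothesis that $\left\Vert e^{tT}x\right\Vert \leq C\omega \left( t\right)$ holds for \emph{all} $t\in \mathbb{R}$, with $\omega \left( t\right) =\left( 1+\left\vert t\right\vert \right) ^{\alpha }$. Thus essentially all the work lies in upgrading the discrete bound over $\mathbb{Z}$ to a continuous bound over $\mathbb{R}$; once this is done, the hypotheses of Proposition~2.7 are met and the result is immediate.

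First I would carry out the discrete-to-continuous passage exactly as in the verification that $\mathcal{D}_{A}^{\alpha }\left( \mathbb{R}\right) =\mathcal{D}_{A}^{\alpha }\left( \mathbb{Z}\right)$ given after Theorem~2.1. For an arbitrary $t\in \mathbb{R}$, write $t=n+r$ with $n\in \mathbb{Z}$, $\left\vert r\right\vert <1$ and $\left\vert n\right\vert \leq \left\vert t\right\vert $. Using the semigroup identity $e^{tT}=e^{rT}e^{nT}$ together with the crude estimate $\left\Vert e^{rT}\right\Vert \leq e^{\left\vert r\right\vert \left\Vert T\right\Vert }\leq e^{\left\Vert T\right\Vert }$, one obtains
\begin{equation*}
\left\Vert e^{tT}x\right\Vert =\left\Vert e^{rT}e^{nT}x\right\Vert \leq e^{\left\Vert T\right\Vert }\left\Vert e^{nT}x\right\Vert \leq Ce^{\left\Vert T\right\Vert }\left( 1+\left\vert n\right\vert \right) ^{\alpha }\leq Ce^{\left\Vert T\right\Vert }\left( 1+\left\vert t\right\vert \right) ^{\alpha }.
\end{equation*}
Hence $\left\Vert e^{tT}x\right\Vert \leq C^{\prime }\omega \left( t\right) $ for all $t\in \mathbb{R}$, where $C^{\prime }=Ce^{\left\Vert T\right\Vert }$ and $\omega $ is the polynomial (hence regular) weight $\left( 1+\left\vert t\right\vert \right) ^{\alpha }$.

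With this continuous growth bound in hand, I would simply invoke Proposition~2.7. Its hypotheses are now all satisfied: $\omega $ is a polynomial weight, $T$ has SVEP, and $\left\Vert e^{tT}x\right\Vert \leq C^{\prime }\omega \left( t\right) $ for every $t\in \mathbb{R}$. Since by assumption $\sigma _{T}\left( x\right) =\left\{ 0\right\} $, Proposition~2.7 yields precisely $T^{\left[ \alpha \right] +1}x=0$, as desired.

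There is no genuine obstacle here; the argument is in effect a one-line reduction to the already-proved Proposition~2.7. The only point deserving a moment's care is that the polynomial weight is preserved under the passage from $\mathbb{Z}$ to $\mathbb{R}$, i.e. that the factor $\left( 1+\left\vert n\right\vert \right) ^{\alpha }$ may be replaced by $\left( 1+\left\vert t\right\vert \right) ^{\alpha }$; this follows from $\left\vert n\right\vert \leq \left\vert t\right\vert $ and the monotonicity of the weight, while the harmless factor $e^{\left\Vert T\right\Vert }$ is absorbed into the constant.
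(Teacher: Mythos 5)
Your proof is correct and matches the paper's intended argument: the paper states this corollary immediately after carrying out exactly the same $t=n+r$ discrete-to-continuous upgrade and labels it "a consequence of Proposition 2.7." Nothing further is needed.
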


By $K\left( X\right) $ we will denote the space of compact operators on a
Banach space $X.$

Next, we have the following:

\begin{proposition}
Assume that the spectrum of the operator $A\in B\left( X\right) $ consists
of one point. If $K\left( X\right) \subset \mathcal{D}_{A}^{\alpha }\left( 
\mathbb{R}
\right) ,$ then%
\begin{equation*}
\mathcal{D}_{A}^{2\left[ \alpha \right] }\left( 
\mathbb{R}
\right) =B\left( X\right) .
\end{equation*}
\end{proposition}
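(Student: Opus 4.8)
The plan is to reduce everything to a nilpotency statement about $A$ and then read the conclusion off from Theorem 2.1. First I would pass to $B:=A-\lambda I$, where $\sigma(A)=\{\lambda\}$. Since $\Delta_A=\Delta_B$ and $\sigma(B)=\{0\}$, the operator $B$ is quasinilpotent and in particular is not invertible. Write $m:=\left[\alpha\right]$. By Theorem 2.1 the hypothesis $K(X)\subset\mathcal{D}_A^{\alpha}(\mathbb{R})$ says precisely that $\Delta_B^{m+1}(K)=0$ for every compact operator $K$, hence for every rank one operator. For $x\in X$ and $\varphi\in X^{\ast}$ I denote by $x\otimes\varphi$ the rank one operator $(x\otimes\varphi)y=\langle\varphi,y\rangle x$, and I set $L_B T=BT$, $R_B T=TB$. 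The whole argument rests on the single claim $B^{m+1}=0$; granting it, the statement follows quickly.

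For the claim I would use $\Delta_B=L_B-R_B$ with $L_B,R_B$ commuting, together with $L_B(x\otimes\varphi)=(Bx)\otimes\varphi$ and $R_B(x\otimes\varphi)=x\otimes(B^{\ast}\varphi)$, to expand
\[
\Delta_B^{m+1}(x\otimes\varphi)=\sum_{k=0}^{m+1}(-1)^k\binom{m+1}{k}\bigl(B^{m+1-k}x\bigr)\otimes\bigl((B^{\ast})^{k}\varphi\bigr)=0 .
\]
Evaluating this operator at an arbitrary $y\in X$ and using that $x$ is arbitrary, I obtain the operator identity
\[
\sum_{k=0}^{m+1}(-1)^k\binom{m+1}{k}\langle\varphi,B^{k}y\rangle\,B^{m+1-k}=0,\qquad \forall\,y\in X,\ \forall\,\varphi\in X^{\ast}.
\]
The decisive term is $k=m+1$, which equals $(-1)^{m+1}\langle\varphi,B^{m+1}y\rangle\,I$. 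The main obstacle, and the real content of the proof, is exactly here: if $B^{m+1}\neq 0$, I would choose $y_0$ with $B^{m+1}y_0\neq 0$ and, by Hahn--Banach, $\varphi_0$ with $\langle\varphi_0,B^{m+1}y_0\rangle\neq 0$; isolating the identity term in the identity above then expresses $I$ as a linear combination of $B,B^{2},\dots,B^{m+1}$, i.e. $I=B\,q(B)$ for some polynomial $q$. This would make $B$ invertible, contradicting $\sigma(B)=\{0\}$. Hence $B^{m+1}=0$.

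Finally, from $B^{m+1}=0$ I get $L_B^{m+1}=R_B^{m+1}=0$, so expanding
\[
\Delta_B^{2m+1}=(L_B-R_B)^{2m+1}=\sum_{j=0}^{2m+1}(-1)^{j}\binom{2m+1}{j}L_B^{2m+1-j}R_B^{j}
\]
every summand vanishes: if $j\le m$ then $2m+1-j\ge m+1$ and $L_B^{2m+1-j}=0$, while if $j\ge m+1$ then $R_B^{j}=0$. Thus $\Delta_A^{2m+1}=\Delta_B^{2m+1}=0$ on all of $B(X)$, that is, $\ker\Delta_A^{2[\alpha]+1}=B(X)$. Since $2[\alpha]$ is an integer, Theorem 2.1 applied with exponent $2[\alpha]$ gives $\mathcal{D}_A^{2[\alpha]}(\mathbb{R})=\ker\Delta_A^{2[\alpha]+1}=B(X)$, as required. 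The only genuinely delicate step is the isolation of the identity term combined with quasinilpotence; the reduction to $B$ and the combinatorial vanishing of $\Delta_B^{2m+1}$ are routine.
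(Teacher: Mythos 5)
Your proof is correct, but it reaches the key fact $(A-\lambda I)^{[\alpha]+1}=0$ by a genuinely different route than the paper. The paper first applies the uniform boundedness principle to the family $\{(1+n)^{-\alpha}e^{n\Delta_A}\}$ acting on $K(X)$ to obtain a single constant $C$ with $\left\Vert e^{nA}Te^{-nA}\right\Vert \leq C(1+\left\vert n\right\vert)^{\alpha}\left\Vert T\right\Vert$ for all compact $T$; it then feeds the rank-one operators $x\otimes\varphi$ into this \emph{uniform} bound to get $\left\Vert e^{nA}\right\Vert \left\Vert e^{-nA}\right\Vert \leq C(1+\left\vert n\right\vert)^{\alpha}$, combines this with $\left\Vert e^{n(A-\lambda I)}\right\Vert \geq 1$ to conclude that $\left\Vert e^{nN}\right\Vert$ itself has polynomial growth for $N=A-\lambda I$, and finally invokes the local spectral result Corollary 2.8 to deduce $N^{[\alpha]+1}=0$. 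You instead apply Theorem 2.1 operator-by-operator to each rank-one operator (so no uniformity is needed), convert $\Delta_B^{m+1}(x\otimes\varphi)=0$ into the identity $\sum_{k=0}^{m+1}(-1)^{k}\binom{m+1}{k}\langle\varphi,B^{k}y\rangle B^{m+1-k}=0$, and isolate the $k=m+1$ term to show that $B^{m+1}\neq 0$ would force $I=Bq(B)$, contradicting quasinilpotence; this replaces uniform boundedness plus a second pass through the local spectral machinery by pure algebra and the fact that a quasinilpotent operator is not invertible. The paper's route has the side benefit of producing the quantitative estimate $\left\Vert e^{nA}\right\Vert \left\Vert e^{-nA}\right\Vert =O\left( (1+\left\vert n\right\vert)^{\alpha}\right)$, which your argument does not yield. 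Both proofs finish in essentially the same way: once $N^{m+1}=0$, the orbit $e^{tA}Te^{-tA}$ is a polynomial in $t$ of degree at most $2m$ (equivalently $\Delta_A^{2m+1}=0$), and your re-invocation of Theorem 2.1 at the integer exponent $2[\alpha]$ is a legitimate shortcut for the paper's explicit expansion.
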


\begin{proof}
We have 
\begin{equation*}
\left\Vert e^{n\Delta _{A}}\left( T\right) \right\Vert =\left\Vert
e^{nA}Te^{-nA}\right\Vert \leq C_{T}\left( 1+n\right) ^{\alpha }\text{, \ }%
\forall T\in K\left( X\right) ,\text{ }\forall n\in 
\mathbb{N}
.
\end{equation*}%
Applying uniform boundedness principle to the sequence of operators 
\begin{equation*}
\left\{ \frac{1}{\left( 1+n\right) ^{\alpha }}e^{n\Delta _{A}}\right\}
_{n\in 
\mathbb{N}
},
\end{equation*}%
we obtain existence of a constant $C>0$ such that 
\begin{equation*}
\left\Vert e^{nA}Te^{-nA}\right\Vert \leq C\left( 1+n\right) ^{\alpha
}\left\Vert T\right\Vert \text{, \ }\forall T\in K\left( X\right) ,\text{ }%
\forall n\in 
\mathbb{N}
.
\end{equation*}%
Consequently, we have 
\begin{equation*}
\left\Vert e^{nA}Te^{-nA}\right\Vert \leq C\left( 1+\left\vert n\right\vert
\right) ^{\alpha }\left\Vert T\right\Vert \text{, \ }\forall T\in K\left(
X\right) ,\text{ }\forall n\in 
\mathbb{Z}
.
\end{equation*}%
For a given $x\in X$ and $\varphi \in X^{\ast }$, let $x\otimes \varphi $ be
the one dimensional operator on $X;$%
\begin{equation*}
x\otimes \varphi :y\mapsto \varphi \left( y\right) x\text{, \ }y\in X.
\end{equation*}%
By taking $T=x\otimes \varphi $ in the preceding inequality, we can write 
\begin{equation*}
\left\Vert e^{nA}x\right\Vert \left\Vert e^{-nA^{\ast }}\varphi \right\Vert
\leq C\left( 1+\left\vert n\right\vert \right) ^{\alpha }\left\Vert
x\right\Vert \left\Vert \varphi \right\Vert \text{, \ }\forall x\in X,\text{ 
}\forall \varphi \in X^{\ast },
\end{equation*}%
which implies 
\begin{equation*}
\left\Vert e^{nA}\right\Vert \left\Vert e^{-nA}\right\Vert \leq C\left(
1+\left\vert n\right\vert \right) ^{\alpha }\text{, \ }\forall n\in 
\mathbb{Z}
.
\end{equation*}%
Now, assume that $\sigma \left( A\right) =\left\{ \lambda \right\} .$ Then
as $\sigma \left( e^{n\left( A-\lambda I\right) }\right) =\left\{ 1\right\}
, $ we have 
\begin{equation*}
\left\Vert e^{n\left( A-\lambda I\right) }\right\Vert \geq 1,\text{ \ }%
\forall n\in 
\mathbb{Z}
,
\end{equation*}%
so that 
\begin{eqnarray*}
\left\Vert e^{n\left( A-\lambda I\right) }\right\Vert &\leq &\left\Vert
e^{n\left( A-\lambda I\right) }\right\Vert \left\Vert e^{-n\left( A-\lambda
I\right) }\right\Vert \\
&=&\left\Vert e^{nA}\right\Vert \left\Vert e^{-nA}\right\Vert \\
&\leq &C\left( 1+\left\vert n\right\vert \right) ^{\alpha }\text{, \ }%
\forall n\in 
\mathbb{Z}
.
\end{eqnarray*}%
Thus, we obtain that%
\begin{equation*}
\left\Vert e^{n\left( A-\lambda I\right) }\right\Vert \leq C\left(
1+\left\vert n\right\vert \right) ^{\alpha }\text{, \ }\forall n\in 
\mathbb{Z}
.
\end{equation*}%
By Corollary 2.8, $\left( A-\lambda I\right) ^{k+1}=0,$ where $k=\left[
\alpha \right] .$ If $N:=A-\lambda I,$ then $A=\lambda I+N$, where $N$ is a
nilpotent of degree $\leq k+1.$ Further, for an arbitrary $T\in B\left(
X\right) $ and $t\in 
\mathbb{R}
,$ from the identity 
\begin{eqnarray*}
e^{tA}Te^{-tA} &=&e^{t\left( \lambda I+N\right) }Te^{-t\left( \lambda
I+N\right) } \\
&=&\left( I+\frac{tN}{1!}+...+\frac{t^{k}N^{k}}{k!}\right) T\left( I-\frac{tN%
}{1!}+...+\left( -1\right) ^{k}\frac{t^{k}N^{k}}{k!}\right) ,
\end{eqnarray*}%
we can write 
\begin{equation*}
e^{tA}Te^{-tA}=T+tf_{1}\left( N,T\right) +...+t^{2k}f_{2k}\left( N,T\right) ,
\end{equation*}%
where the functions $f_{1},...,f_{2k}$ do not depend from $t.$ It follows
that 
\begin{equation*}
\left\Vert e^{tA}Te^{-tA}\right\Vert =O\left( \left( 1+\left\vert
t\right\vert \right) ^{2k}\right)
\end{equation*}%
and so $T\in \mathcal{D}_{A}^{2k}\left( 
\mathbb{R}
\right) .$ Thus we have $\mathcal{D}_{A}^{2k}\left( 
\mathbb{R}
\right) =B\left( X\right) .$
\end{proof}

As a consequence of Proposition 2.9, we have the following.

\begin{corollary}
Assume that $\Delta _{A}$ is a quasinilpotent for some $A\in B\left(
X\right) .$ If $K\left( X\right) \subset \mathcal{D}_{A}^{\alpha }\left( 
\mathbb{R}
\right) ,$ then $\Delta _{A}$ is a nilpotent of degree $\leq 2\left[ \alpha %
\right] +1.$
\end{corollary}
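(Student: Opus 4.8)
The plan is to reduce this to Proposition 2.9 together with Theorem 2.1, the crucial observation being that the hypothesis on $\Delta_A$ is really a hypothesis on the spectrum of $A$. First I would invoke the spectral formula $\sigma(\Delta_A)=\{\lambda-\mu:\lambda,\mu\in\sigma(A)\}$ (cited above as \cite[Theorem 3.5.1]{9}). If $\Delta_A$ is quasinilpotent, then $\sigma(\Delta_A)=\{0\}$, which forces $\lambda-\mu=0$ for all $\lambda,\mu\in\sigma(A)$; equivalently, $\sigma(A)$ is a single point. Thus quasinilpotency of $\Delta_A$ is exactly the standing hypothesis of Proposition 2.9, and I would write $\sigma(A)=\{\lambda_0\}$.

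With that hypothesis in hand, the assumption $K(X)\subset\mathcal{D}_A^{\alpha}(\mathbb{R})$ lets me apply Proposition 2.9 directly to obtain
\begin{equation*}
\mathcal{D}_A^{2[\alpha]}(\mathbb{R})=B(X).
\end{equation*}
The next step is to reinterpret the left-hand side via Theorem 2.1. Since $\sigma(A)$ consists of one point, Theorem 2.1 applies for every exponent; applying it with the exponent $2[\alpha]$ (which is already an integer, so that $[2[\alpha]]=2[\alpha]$) gives
\begin{equation*}
\mathcal{D}_A^{2[\alpha]}(\mathbb{R})=\ker\Delta_A^{[2[\alpha]]+1}=\ker\Delta_A^{2[\alpha]+1}.
\end{equation*}

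Combining the two displays yields $\ker\Delta_A^{2[\alpha]+1}=B(X)$, which means $\Delta_A^{2[\alpha]+1}(T)=0$ for every $T\in B(X)$, i.e. $\Delta_A^{2[\alpha]+1}=0$ as an operator on $B(X)$. Hence $\Delta_A$ is nilpotent of degree at most $2[\alpha]+1$, as claimed. I do not expect a genuine obstacle here, since the argument is a bookkeeping chain of already-established results; the only point requiring care is the first step, namely recognizing that quasinilpotency of the inner derivation $\Delta_A$ is equivalent to $\sigma(A)$ being a singleton, so that Propositions 2.9 and Theorem 2.1 become available. A secondary point to check is that the exponent $2[\alpha]$ is an integer, so no loss occurs when passing to its integer part in Theorem 2.1.
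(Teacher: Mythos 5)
Your argument is correct and follows the paper's own proof exactly: both deduce from $\sigma(\Delta_A)=\{0\}$ and the formula $\sigma(\Delta_A)=\{\lambda-\mu:\lambda,\mu\in\sigma(A)\}$ that $\sigma(A)$ is a singleton, then chain Proposition 2.9 with Theorem 2.1 (applied with exponent $2[\alpha]$) to get $\ker\Delta_A^{2[\alpha]+1}=B(X)$. Your extra remark that $2[\alpha]$ is already an integer is a sensible bookkeeping check that the paper leaves implicit.
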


\begin{proof}
It follows from the identity 
\begin{equation*}
\sigma \left( \Delta _{A}\right) =\left\{ \lambda -\mu :\lambda ,\mu \in
\sigma \left( A\right) \right\} =\left\{ 0\right\}
\end{equation*}%
that $\sigma \left( A\right) $ consists of one point. By Proposition 2.9, $%
\mathcal{D}_{A}^{2\left[ \alpha \right] }\left( 
\mathbb{R}
\right) =B\left( X\right) .$ On the other hand, by Theorem 2.1, 
\begin{equation*}
\ker \Delta _{A}^{2\left[ \alpha \right] +1}=\mathcal{D}_{A}^{2\left[ \alpha %
\right] }\left( 
\mathbb{R}
\right) =B\left( X\right) .
\end{equation*}
\end{proof}

We conclude this section with the following result:

\begin{proposition}
The following assertions hold:

$a)$ For an arbitrary $T\in D_{A}\left( 
\mathbb{R}
\right) ,$ 
\begin{equation*}
\mathnormal{dist}\left( T,\left\{ A\right\} ^{\prime }\right) \leq
\sup_{t\in 
\mathbb{R}
}\left\Vert T-e^{tA}Te^{-tA}\right\Vert .
\end{equation*}

$b)$ If $D_{A}\left( 
\mathbb{R}
\right) $ is closed, then there exists a constant $C>0$ such that 
\begin{equation*}
\sup_{t\in 
\mathbb{R}
}\left\Vert T-e^{tA}Te^{-tA}\right\Vert \leq C\mathnormal{dist}\left(
T,\left\{ A\right\} ^{\prime }\right) ,\text{ \ }\forall T\in D_{A}\left( 
\mathbb{R}
\right) .
\end{equation*}
\end{proposition}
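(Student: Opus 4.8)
The plan is to treat the two inequalities separately, using in both the identity $e^{tA}Te^{-tA}=e^{t\Delta_A}(T)$ to reduce everything to the norm-continuous one-parameter group $\beta_t:=e^{t\Delta_A}$ generated by the bounded operator $\Delta_A$ on the Banach space $B(X)$, together with the elementary observations that $\ker\Delta_A=\{A\}'$ and that the seminorm $p(T):=\sup_t\|T-\beta_t(T)\|$ vanishes precisely on $\{A\}'$.

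For $(a)$, set $M:=\sup_t\|T-\beta_t(T)\|$ and introduce the ergodic averages
$$T_R:=\frac{1}{2R}\int_{-R}^{R}\beta_t(T)\,dt,$$
which are well-defined Bochner integrals since $t\mapsto\beta_t(T)$ is norm-continuous and bounded by $|||T|||:=\sup_t\|\beta_t(T)\|$ for $T\in D_A(\mathbb{R})$. Two facts drive the argument. First, $\|T-T_R\|\le\frac{1}{2R}\int_{-R}^{R}\|T-\beta_t(T)\|\,dt\le M$ for every $R$. Second, pulling the bounded operator $\Delta_A$ through the integral and telescoping,
$$\Delta_A(T_R)=\frac{1}{2R}\int_{-R}^{R}\tfrac{d}{dt}\beta_t(T)\,dt=\frac{1}{2R}\bigl(\beta_R(T)-\beta_{-R}(T)\bigr),$$
so that $\|\Delta_A(T_R)\|\le |||T|||/R\to0$; thus the $T_R$ are asymptotically in the commutant while staying within distance $M$ of $T$. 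It then remains to pass to a limit $S$ of the $T_R$ lying in $\{A\}'$ with $\|T-S\|\le M$, whence $\mathrm{dist}(T,\{A\}')\le M$.

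Extracting this limiting $S$ is the step I expect to be the main obstacle. The transparent route is via duality: as $\{A\}'$ is norm-closed, $\mathrm{dist}(T,\{A\}')=\sup\{|\Lambda(T)|:\Lambda\in(\{A\}')^{\perp},\ \|\Lambda\|\le1\}$, and for $\Lambda=\Delta_A^{*}\psi\in\mathrm{ran}\,\Delta_A^{*}$ one computes $\Lambda(T_R)=\frac{1}{2R}\bigl(\psi(\beta_R(T))-\psi(\beta_{-R}(T))\bigr)\to0$, giving $|\Lambda(T)|\le\|\Lambda\|\,M+|\Lambda(T_R)|\to M$. The difficulty is that $(\{A\}')^{\perp}=\overline{\mathrm{ran}\,\Delta_A^{*}}^{\,w^*}$ can be strictly larger than the norm-closure of $\mathrm{ran}\,\Delta_A^{*}$, so this estimate is immediate only when $\Delta_A$ has closed range. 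Equivalently, a weak-operator cluster point $S$ of $\{T_R\}$ satisfies $\Delta_A S=0$ and, by weak lower semicontinuity of the norm, $\|T-S\|\le M$; the catch is to guarantee that such $S$ lives in $B(X)$ rather than escaping to the bidual $B(X)^{**}$. This is automatic when $X$ is reflexive, since bounded sets are then relatively weak-operator compact and the closed convex hull of the orbit lies in $\overline{B}(T,M)$, and it holds more generally whenever $\Delta_A$ has closed range; securing it for arbitrary $X$ is exactly where I would expect the finer structure of the problem (a compactness or fixed-point input for the $\{\beta_s\}$-action) to be needed.

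For $(b)$, the hypothesis that $D_A(\mathbb{R})$ is closed in $B(X)$ makes it a Banach space in the operator norm, and the inequality follows from the uniform boundedness principle. For each fixed $t$ the map $L_t:U\mapsto U-\beta_t(U)$ is bounded on $B(X)$, hence on $D_A(\mathbb{R})$, with $\|L_t(U)\|\le(1+\|e^{tA}\|\,\|e^{-tA}\|)\|U\|$; and for each fixed $T\in D_A(\mathbb{R})$ one has $\sup_t\|L_t(T)\|=p(T)<\infty$ by the very definition of the class. The uniform boundedness principle therefore yields a constant $C$ with $\sup_t\|L_t\|\le C$, i.e. $p(T)\le C\|T\|$ for all $T\in D_A(\mathbb{R})$. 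Finally, since every $S\in\{A\}'$ satisfies $L_t(S)=0$, replacing $T$ by $T-S$ gives $p(T)=p(T-S)\le C\|T-S\|$, and taking the infimum over $S\in\{A\}'$ yields $\sup_t\|T-e^{tA}Te^{-tA}\|=p(T)\le C\,\mathrm{dist}(T,\{A\}')$, as required.
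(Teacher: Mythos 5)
Your part (b) is correct and is essentially the paper's argument: apply the uniform boundedness principle to the family $T\mapsto e^{tA}Te^{-tA}$ on the Banach space $D_{A}\left( \mathbb{R}\right) $ to obtain a uniform constant $K$, then use the identity $T-e^{tA}Te^{-tA}=(T-S)-e^{tA}(T-S)e^{-tA}$ for $S\in \{A\}^{\prime }$ and take the infimum over $S$. Nothing to add there.

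Part (a), however, is left with a genuine gap, and you name it yourself: after building the Ces\`{a}ro averages $T_{R}$ with $\Vert T-T_{R}\Vert \leq M$ and $\Delta _{A}(T_{R})\rightarrow 0$, you still need a limit (or cluster point) $S$ of $\{T_{R}\}$ lying in $B(X)\cap \{A\}^{\prime }$ with $\Vert T-S\Vert \leq M$, and without reflexivity of $X$ or closed range of $\Delta _{A}$ you do not produce one, so the proof does not close for a general Banach space. The device the paper uses instead is an invariant mean $\Phi $ on the (amenable) translation group: define $\pi (T)$ weakly by $\langle \pi (T)x,\varphi \rangle =\Phi _{t}\langle e^{tA}Te^{-tA}x,\varphi \rangle $. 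Translation invariance of $\Phi $ gives $e^{sA}\pi (T)e^{-sA}=\pi (T)$ for every $s$, hence $\pi (T)\in \{A\}^{\prime }$; and since each value $\langle \pi (T)x,\varphi \rangle $ lies in the closed convex hull of $\left\{ \langle e^{tA}Te^{-tA}x,\varphi \rangle :t\in \mathbb{R}\right\} $, the pointwise bound $\left\vert \langle (T-e^{tA}Te^{-tA})x,\varphi \rangle \right\vert \leq \delta \Vert x\Vert \Vert \varphi \Vert $ yields $\Vert T-\pi (T)\Vert \leq \delta $ directly, with no passage to a limit of the $T_{R}$. (To be fair, this construction a priori defines an operator from $X$ into $X^{\ast \ast }$, so the weak-compactness subtlety you flag is also present, though tacitly suppressed, in the paper's proof; but the invariant mean is the missing ingredient that the paper's argument turns on, and it is what your ergodic-average scheme is an incomplete substitute for.)
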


\begin{proof}
a) Let $T\in D_{A}\left( 
\mathbb{R}
\right) $ and $\delta :=\sup_{t\in 
\mathbb{R}
}\left\Vert T-e^{tA}Te^{-tA}\right\Vert .$ Define a mapping $\pi
:D_{A}\left( 
\mathbb{R}
\right) \rightarrow B\left( X\right) $ by 
\begin{equation*}
\langle \pi \left( T\right) x,\varphi \rangle =\Phi _{t}\langle
e^{tA}Te^{-tA}x,\varphi \rangle \text{ \ }\left( x\in X,\text{ }\varphi \in
X^{\ast }\right) ,
\end{equation*}%
where $\Phi $ is a fixed invariant mean on $%
\mathbb{Z}
.$ We claim that $\pi \left( T\right) \in \left\{ A\right\} ^{\prime }.$
Indeed, for an arbitrary $s\in 
\mathbb{R}
,$ from the identities 
\begin{eqnarray*}
\langle e^{sA}\pi \left( T\right) e^{-sA}x,\varphi \rangle &=&\langle \pi
\left( T\right) e^{-sA}x,e^{sA^{\ast }}\varphi \rangle \\
&=&\Phi _{t}\langle e^{tA}Te^{-tA}e^{-sA}x,e^{sA^{\ast }}\varphi \rangle \\
&=&\Phi _{t}\langle e^{\left( t+s\right) A}Te^{-\left( t+s\right)
A}x,\varphi \rangle \\
&=&\Phi _{t}\langle e^{tA}Te^{-tA}x,\varphi \rangle \\
&=&\langle \pi \left( T\right) x,\varphi \rangle ,
\end{eqnarray*}%
we have $e^{sA}\pi \left( T\right) =\pi \left( T\right) e^{sA}.$ This
clearly implies $A\pi \left( T\right) =\pi \left( T\right) A.$ Notice that $%
\langle \pi \left( T\right) x,\varphi \rangle $ belongs to the closure of
convex combination of the set 
\begin{equation*}
\left\{ \langle e^{tA}Te^{-tA}x,\varphi \rangle :t\in 
\mathbb{R}
\right\} .
\end{equation*}%
Now, from the inequality 
\begin{equation*}
\left\vert \langle Tx,\varphi \rangle -\langle e^{tA}Te^{-tA}x,\varphi
\rangle \right\vert \leq \delta \left\Vert x\right\Vert \left\Vert \varphi
\right\Vert
\end{equation*}%
we have 
\begin{equation*}
\left\vert \langle Tx,\varphi \rangle -\langle \pi \left( T\right) x,\varphi
\rangle \right\vert \leq \delta \left\Vert x\right\Vert \left\Vert \varphi
\right\Vert \text{, \ }\forall x\in X,\text{ }\forall \varphi \in X^{\ast }.
\end{equation*}%
Hence $\left\Vert T-\pi \left( T\right) \right\Vert \leq \delta .$ Since $%
\pi \left( T\right) \in \left\{ A\right\} ^{\prime }$, the result follows.

b) Applying uniform boundedness principle to the family of the operators 
\begin{equation*}
f_{t}:D_{A}\left( 
\mathbb{R}
\right) \rightarrow B\left( X\right) \text{; \ }f_{t}\left( T\right)
=e^{tA}Te^{-tA}\text{ }\left( t\in 
\mathbb{R}
\right) ,
\end{equation*}
we obtain existence of a constant $K>0$ such that 
\begin{equation*}
\left\Vert e^{tA}Te^{-tA}\right\Vert \leq K\left\Vert T\right\Vert \text{, \ 
}\forall T\in D_{A}\left( 
\mathbb{R}
\right) ,\text{ }\forall t\in 
\mathbb{R}
.
\end{equation*}%
Now if $T\in D_{A}\left( 
\mathbb{R}
\right) $ and $S\in \left\{ A\right\} ^{\prime }$, then as $T-S\in
D_{A}\left( 
\mathbb{R}
\right) ,$ from the relations 
\begin{eqnarray*}
\left\Vert T-e^{tA}Te^{-tA}\right\Vert &=&\left\Vert T-S-e^{tA}\left(
T-S\right) e^{-tA}\right\Vert \\
&\leq &\left( 1+K\right) \left\Vert T-S\right\Vert
\end{eqnarray*}%
we have 
\begin{equation*}
\sup_{t\in 
\mathbb{R}
}\left\Vert T-e^{tA}Te^{-tA}\right\Vert \leq \left( 1+K\right) \text{dist}%
\left( T,\left\{ A\right\} ^{\prime }\right) .
\end{equation*}
\end{proof}

\section{Decomposability}

Recall that an operator $A\in B\left( X\right) $ is called \textit{%
decomposable} if, for every open covering $\left\{ U,W\right\} $ of the
complex plane, there exist a pair of $T-$invariant closed linear subspaces $E
$ and $F$ of $X$ such that $E+F=X$, $\sigma \left( A\mid _{E}\right) \subset
U$ and $\sigma \left( A\mid _{F}\right) \subset W$ (for instance, see \cite%
{3} and \cite{9}). For a closed subset $F$ of $%
\mathbb{C}
,$ the \textit{local} \textit{spectral subspace} $X_{A}\left( F\right) $ of $%
A\in B\left( X\right) $ is defined by 
\begin{equation*}
X_{A}\left( F\right) =\left\{ x\in X:\sigma _{A}\left( x\right) \subseteq
F\right\} .
\end{equation*}%
If $A$ is decomposable, then $X_{A}\left( F\right) $ is closed for every
closed set $F\subset 
\mathbb{C}
$ \cite[Sect. 1.2]{9}. Moreover, 
\begin{equation*}
X_{f\left( A\right) }\left( F\right) =X_{A}\left( f^{-1}\left( F\right)
\right) ,
\end{equation*}%
for every closed set $F\subset 
\mathbb{C}
$, where $f\left( A\right) $ is defined by the Riesz-Dunford functional
calculus \cite[Theorem 3.3.6]{9}.

\begin{proposition}
Assume that $A\in B\left( X\right) $ is decomposable and $T\in B\left(
X\right) $ satisfies the condition 
\begin{equation*}
\left\Vert e^{nA}Te^{-nA}\right\Vert \leq C_{T}\left( 1+\left\vert
n\right\vert \right) ^{\alpha }\text{ }\left( \alpha \geq 0\right) ,
\end{equation*}%
for all $n\in 
\mathbb{Z}
$ and for some $C_{T}>0$. Then the following conditions are equivalent:

$\left( a\right) $ $TX_{A}\left( F\right) \subseteq X_{A}\left( F\right) $
for every closed set $F\subset 
\mathbb{C}
.$

$\left( b\right) \mathcal{\ }\Delta _{A}^{\left[ \alpha \right] +1}\left(
T\right) =0.$

In particular, if $0\leq \alpha <1,$ then $AT=TA$ if and only if $%
TX_{A}\left( F\right) \subseteq X_{A}\left( F\right) ,$ for every closed set 
$F\subset 
\mathbb{C}
.$
\end{proposition}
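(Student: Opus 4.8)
The plan is to prove the two implications separately; the final ``in particular'' clause is then immediate, since $0\le\alpha<1$ forces $[\alpha]+1=1$, so that condition $(b)$ reads $\Delta_{A}(T)=0$, i.e. $AT=TA$. For the direction $(b)\Rightarrow(a)$ I would show that $\Delta_{A}^{[\alpha]+1}(T)=0$ already forces $\sigma_{A}(Tx)\subseteq\sigma_{A}(x)$ for every $x$, which is exactly $(a)$ because $x\in X_{A}(F)$ means $\sigma_{A}(x)\subseteq F$. Put $p=[\alpha]+1$ and $R(z)=(zI-A)^{-1}$. From the commutator relation $R(z)\Delta_{A}^{k}(T)=\Delta_{A}^{k}(T)R(z)+R(z)\Delta_{A}^{k+1}(T)R(z)$ a telescoping yields the finite expansion
\[ R(z)T=\sum_{k=0}^{p-1}\Delta_{A}^{k}(T)\,R(z)^{k+1},\qquad z\in\rho(A), \]
valid precisely because $\Delta_{A}^{p}(T)=0$. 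Letting $u(z)$ be the local resolvent of $x$ (analytic on $\mathbb{C}\setminus\sigma_{A}(x)$ with $(zI-A)u(z)=x$) and using $R(z)^{k+1}x=\tfrac{(-1)^{k}}{k!}u^{(k)}(z)$ on $\rho(A)$, the function $w(z):=\sum_{k=0}^{p-1}\tfrac{(-1)^{k}}{k!}\Delta_{A}^{k}(T)u^{(k)}(z)$ is an analytic continuation of $R(z)Tx$ to $\mathbb{C}\setminus\sigma_{A}(x)$; differentiating $(zI-A)u(z)=x$ gives $(zI-A)u^{(k)}(z)=-k\,u^{(k-1)}(z)$, and a short telescoping then gives $(zI-A)w(z)=Tx$ throughout $\mathbb{C}\setminus\sigma_{A}(x)$, so $\sigma_{A}(Tx)\subseteq\sigma_{A}(x)$. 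This uses only that $A$, being decomposable, has SVEP.

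For $(a)\Rightarrow(b)$ the strategy is to prove that, for \emph{every} $x\in X$ and $\varphi\in X^{\ast}$, the scalar orbit $F_{x,\varphi}(t):=\langle e^{tA}Te^{-tA}x,\varphi\rangle$ is a polynomial in $t$ of degree $\le[\alpha]$. Granting this, I would match coefficients in the entire function $\langle e^{tA}Te^{-tA}x,\varphi\rangle=\sum_{k\ge0}\tfrac{t^{k}}{k!}\langle\Delta_{A}^{k}(T)x,\varphi\rangle$: vanishing of the $t^{k}$--coefficients for $k\ge[\alpha]+1$, for all $x,\varphi$, is exactly $\Delta_{A}^{[\alpha]+1}(T)=0$. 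Note that $F_{x,\varphi}\in L_{\omega}^{\infty}(\mathbb{R})$ for $\omega(t)=(1+|t|)^{\alpha}$ by the growth hypothesis. I aim to show $\mathrm{sp}_{B}\{F_{x,\varphi}\}\subseteq\{0\}$; the ideal description $J_{\omega}(\{0\})=\{f:\widehat{f}(0)=\cdots=\widehat{f}^{([\alpha])}(0)=0\}$ from the proof of Proposition 2.7 identifies the functions in $L_{\omega}^{\infty}(\mathbb{R})$ annihilated (under convolution) by $J_{\omega}(\{0\})$ as exactly the polynomials of degree $\le[\alpha]$, which gives the claim.

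The heart of the matter is the estimate $\mathrm{sp}_{B}\{F_{x,\varphi}\}\subseteq\{0\}$, which I would obtain by localization and squeezing. Fix $\varepsilon>0$, cover $\sigma(A)$ by finitely many open balls $U_{1},\dots,U_{m}$ of radius $\varepsilon$ centered at $\mu_{j}$, and use decomposability to write $x=\sum_{j}x_{j}$ with $x_{j}\in Y_{j}:=X_{A}(\overline{U_{j}})$, each $Y_{j}$ closed and $A$-invariant with $\sigma(A|_{Y_{j}})\subseteq\overline{U_{j}}$. By $(a)$, $TY_{j}\subseteq Y_{j}$, so with $B_{j}:=A|_{Y_{j}}-\mu_{j}I$ and $T_{j}:=T|_{Y_{j}}$ one has $e^{tA}Te^{-tA}x_{j}=e^{t\Delta_{B_{j}}}(T_{j})x_{j}\in Y_{j}$ and $\|e^{t\Delta_{B_{j}}}(T_{j})\|\le C_{T}(1+|t|)^{\alpha}$. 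Since $\sigma(\Delta_{B_{j}})=\sigma(B_{j})-\sigma(B_{j})\subseteq\overline{B}(0,2\varepsilon)$, the Carleman transform of $F_{x_{j},\varphi}(t)=\langle e^{t\Delta_{B_{j}}}(T_{j})x_{j},\varphi\rangle$ coincides on $\{|z|>2\varepsilon\}$ with $\langle(zI-\Delta_{B_{j}})^{-1}(T_{j})x_{j},\varphi\rangle$, which is analytic off $\overline{B}(0,2\varepsilon)$; hence it extends across $i\lambda$ for all $|\lambda|>2\varepsilon$, i.e. $\mathrm{sp}_{B}\{F_{x_{j},\varphi}\}\subseteq[-2\varepsilon,2\varepsilon]$. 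Summing over $j$ gives $\mathrm{sp}_{B}\{F_{x,\varphi}\}\subseteq[-2\varepsilon,2\varepsilon]$, and letting $\varepsilon\to0$ yields $\mathrm{sp}_{B}\{F_{x,\varphi}\}\subseteq\{0\}$.

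I expect the squeezing step to be the main obstacle, precisely because it must avoid assuming SVEP (or decomposability) of the derivation $\Delta_{A}$ on $B(X)$: this is delicate since $\sigma(\Delta_{A})=\sigma(A)-\sigma(A)$ may have nonempty interior, so Proposition 2.4 cannot be invoked for $\Delta_{A}$ directly. The device above circumvents this by localizing $A$ to blocks of spectral diameter $O(\varepsilon)$ and identifying the scalar Carleman transform with a genuine resolvent of $\Delta_{B_{j}}$ on the unbounded component of $\rho(\Delta_{B_{j}})$ (no SVEP needed, only analytic continuation), then sending $\varepsilon\to0$. The two points requiring care are the \emph{uniform} degree bound $[\alpha]$ furnished by synthesis at $\{0\}$ (which is what lets the coefficient-matching conclude $\Delta_{A}^{[\alpha]+1}(T)=0$ for all $x,\varphi$ at once), and the verification that the Carleman integral truly agrees with the resolvent on $\{|z|>2\varepsilon\}$ so that the singularities are trapped in $i[-2\varepsilon,2\varepsilon]$.
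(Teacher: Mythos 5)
Your proposal is correct, and while the direction $(b)\Rightarrow(a)$ is essentially the paper's own argument (you build the same local resolvent $v(z)=\sum_{k}\frac{(-1)^{k}}{k!}\Delta_{A}^{k}(T)u^{(k)}(z)$ for $Tx$; your inclusion of the factors $\frac{1}{k!}$ is in fact the correct normalization, which the paper's displayed formula omits), your route to $(a)\Rightarrow(b)$ is genuinely different. The paper gets there in three short steps by citing Laursen--Neumann: decomposability of $A$ implies $\Delta_{A}$ has SVEP on $B(X)$ (their Proposition 3.4.6), condition $(a)$ is equivalent to $\lim_{n}\Vert\Delta_{A}^{n}(T)\Vert^{1/n}=0$ (their Corollary 3.4.5), whence $\sigma_{\Delta_{A}}(T)=\{0\}$ and Corollary 2.8 of the paper finishes. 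You instead avoid both citations — and in particular avoid ever needing SVEP for $\Delta_{A}$, which you correctly identify as the delicate point — by localizing $A$ to the spectral subspaces $X_{A}(\overline{U_{j}})$ over an $\varepsilon$-cover, using $(a)$ to restrict $T$, identifying the Carleman transform of the scalar orbit with the resolvent of $\Delta_{B_{j}}$ on $\{\operatorname{Re}z>0,\ |z|>2\varepsilon\}$ (a connected set, so the identity theorem applies), squeezing $\mathrm{sp}_{B}\{F_{x,\varphi}\}\subseteq\{0\}$, and then invoking the description of $J_{\omega}(\{0\})$ to conclude the orbit is a polynomial of degree at most $[\alpha]$. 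This is longer and in effect reproves the scalar content of Proposition 2.7, but it is more self-contained and elementary; the price of the paper's brevity is reliance on two nontrivial external results about decomposable operators and inner derivations. Two small housekeeping points for your write-up: the hypothesis gives the growth bound only for $n\in\mathbb{Z}$, so you need the (easy, and already established in the paper after Theorem 2.1) upgrade $\mathcal{D}_{A}^{\alpha}(\mathbb{Z})=\mathcal{D}_{A}^{\alpha}(\mathbb{R})$ before writing $\Vert e^{t\Delta_{B_{j}}}(T_{j})\Vert\leq C_{T}(1+|t|)^{\alpha}$ for real $t$; and the identification of the annihilator of $J_{\omega}(\{0\})$ in $L_{\omega}^{\infty}(\mathbb{R})$ with the polynomials of degree at most $[\alpha]$ deserves a line of justification (the quotient $L_{\omega}^{1}(\mathbb{R})/J_{\omega}(\{0\})$ is finite dimensional with dual spanned by $f\mapsto\widehat{f}^{(j)}(0)$, $0\leq j\leq[\alpha]$, and these functionals are implemented by $t^{j}$ under the stated duality).
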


\begin{proof}
(a)$\Rightarrow $(b) We have%
\begin{equation*}
\left\Vert e^{n\Delta _{A}}\left( T\right) \right\Vert =\left\Vert
e^{nA}Te^{-nA}\right\Vert \leq C_{T}\left( 1+\left\vert n\right\vert \right)
^{\alpha }\text{, \ }\forall n\in 
\mathbb{Z}
.
\end{equation*}%
Since $A$ is decomposable, $\Delta _{A}$ has SVEP \cite[Proposition 3.4.6]{9}
and therefore, 
\begin{equation*}
r_{\Delta _{A}}\left( T\right) =\underset{n\rightarrow \infty }{\overline{%
\lim }}\left\Vert \Delta _{A}^{n}\left( T\right) \right\Vert ^{\frac{1}{n}}.
\end{equation*}%
On the other hand, $TX_{A}\left( F\right) \subseteq X_{A}\left( F\right) $
for every closed set $F\subset 
\mathbb{C}
$ if and only if 
\begin{equation*}
\lim_{n\rightarrow \infty }\left\Vert \Delta _{A}^{n}\left( T\right)
\right\Vert ^{\frac{1}{n}}=0
\end{equation*}
\cite[Corollary 3.4.5]{9}. Now, since $\sigma _{\Delta _{A}}\left( T\right)
=\left\{ 0\right\} $, by Corollary 2.8, $\Delta _{A}^{\left[ \alpha \right]
+1}\left( T\right) =0.$

In fact, (b)$\Rightarrow $(a) follows from \cite[Proposition 3.4.2]{9}.
Here, we present more simple proof. Now, it suffices to show that $\sigma
_{A}\left( Tx\right) \subseteq \sigma _{A}\left( x\right) $ for every $x\in
X.$ If $x\in X$ and $\lambda \in \rho _{A}\left( x\right) ,$ then there is a
neighborhood $U_{\lambda }$ of $\lambda $ with $u\left( z\right) $ analytic
on $U_{\lambda }$ having values in $X$, such that 
\begin{equation*}
\left( zI-A\right) u\left( z\right) =x\text{, \ }\forall z\in U_{\lambda }.
\end{equation*}%
Using this identity, it is easy to check that the function 
\begin{equation*}
v\left( z\right) :=Tu\left( z\right) -\Delta _{A}\left( T\right) u^{\prime
}\left( z\right) +...+\left( -1\right) ^{k}\Delta ^{k}\left( T\right)
u^{\left( k\right) }\left( z\right) \text{ \ }\left( k=\left[ \alpha \right]
\right) ,
\end{equation*}%
satisfies the equation 
\begin{equation*}
\left( zI-A\right) v\left( z\right) =Tx\text{, \ }\forall z\in U_{\lambda }.
\end{equation*}%
This shows that $\lambda \in \rho _{A}\left( Tx\right) .$
\end{proof}

Next, we have the following:

\begin{proposition}
Assume that the operators $A,T\in B\left( X\right) $ satisfy the following
conditions:

$\left( i\right) $ $A$ is decomposable and $\sigma \left( A\right) \subset
\left\{ z\in 
\mathbb{C}
:\func{Re}z>0\right\} ;$

$\left( ii\right) $ $\left\Vert A^{n}TA^{-n}\right\Vert \leq C_{T}\left(
1+\left\vert n\right\vert \right) ^{\alpha }$ $\left( 0\leq \alpha <1\right) 
$ for all $n\in 
\mathbb{Z}
$ and for some $C_{T}>0$.

Then, $AT=TA$ if and only if $TX_{A}\left( F\right) \subseteq X_{A}\left(
F\right) $ for every closed set $F\subset 
\mathbb{C}
$.
\end{proposition}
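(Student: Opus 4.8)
The plan is to reduce the statement to Proposition 3.1 by passing to a logarithm of $A$. Since $\sigma(A)\subset\{z:\operatorname{Re}z>0\}$, we have $0\notin\sigma(A)$, and the principal branch of the logarithm is analytic on an open right half-plane containing $\sigma(A)$. Hence I would set $B:=\log A$, defined by the Riesz--Dunford functional calculus, so that $A=e^{B}$ and $A^{n}=e^{nB}$ for all $n\in\mathbb{Z}$. Condition $(ii)$ then reads $\|e^{nB}Te^{-nB}\|=\|A^{n}TA^{-n}\|\le C_{T}(1+|n|)^{\alpha}$, which is exactly the hypothesis of Proposition 3.1 for the operator $B$.

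Next I would record the two facts that make the reduction work. First, since $A$ is decomposable and $\log$ is analytic on a neighborhood of $\sigma(A)$, the operator $B=\log A$ is again decomposable; thus Proposition 3.1 applies to $B$. Second, because $A$ and $B$ are analytic functions of one another ($B=\log A$, $A=\exp B$), their families of local spectral subspaces coincide. Concretely, using the functional calculus formula $X_{f(A)}(F)=X_{A}(f^{-1}(F))$ together with the local spectral mapping $\sigma_{B}(x)=\log(\sigma_{A}(x))$ and the fact that $\log$ is a homeomorphism of $\sigma(A)$ onto $\sigma(B)$, one gets $X_{A}(F)=X_{B}(\log(F\cap\sigma(A)))$ and $X_{B}(G)=X_{A}(\exp(G\cap\sigma(B)))$. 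Consequently, $T$ leaves every $X_{A}(F)$ invariant if and only if it leaves every $X_{B}(G)$ invariant.

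Now I would assemble the equivalences. Since $0\le\alpha<1$, the final assertion of Proposition 3.1 applied to $B$ yields $BT=TB$ if and only if $TX_{B}(G)\subseteq X_{B}(G)$ for every closed $G\subset\mathbb{C}$. Combining this with the equivalence of invariance for $A$ and for $B$ established above, I obtain $BT=TB$ if and only if $TX_{A}(F)\subseteq X_{A}(F)$ for every closed $F\subset\mathbb{C}$. It remains to note that $BT=TB$ if and only if $AT=TA$: if $T$ commutes with $A$ then it commutes with every Riesz--Dunford function of $A$, in particular with $B=\log A$, and conversely commuting with $B$ forces commuting with $A=e^{B}$. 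Chaining these equivalences gives the desired assertion.

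I expect the main point requiring care to be the transfer of the invariance condition between $A$ and $B=\log A$, that is, the identification of the two families of local spectral subspaces together with the decomposability of $B$. Both rest on standard functional-calculus facts (the formula $X_{f(A)}(F)=X_{A}(f^{-1}(F))$ and the preservation of decomposability under analytic functional calculus), but the injectivity of $\log$ on $\sigma(A)$, guaranteed by $\sigma(A)\subset\{\operatorname{Re}z>0\}$, is what makes the correspondence a genuine bijection and is therefore the hypothesis I would be most careful to exploit. The commutation equivalence $AT=TA\iff BT=TB$ and the forward direction of the statement are then routine.
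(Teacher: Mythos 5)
Your proof is correct and follows essentially the same route as the paper: both reduce the statement to Proposition 3.1 by setting $B=\log A$ via the Riesz--Dunford calculus and transferring decomposability, the growth condition, and the invariance of the local spectral subspaces from $A$ to $B$ (using $X_{f(A)}(F)=X_{A}\left(f^{-1}(F)\right)$), then invoking $AT=TA\Leftrightarrow BT=TB$. The only immaterial difference is that the paper proves the forward implication $AT=TA\Rightarrow TX_{A}(F)\subseteq X_{A}(F)$ directly by applying $T$ to a local resolvent, whereas you obtain it by running the same chain of equivalences back through $B$.
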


\begin{proof}
Assume that $TX_{A}\left( F\right) \subseteq X_{A}\left( F\right) $ for
every closed set $F\subset 
\mathbb{C}
$. We can write $A=e^{B},$ where $B=\log A.$ Then, $B$ is decomposable \cite[%
Theorem 3.3.6]{9} and%
\begin{equation*}
\left\Vert e^{nB}Te^{-nB}\right\Vert \leq C_{T}\left( 1+\left\vert
n\right\vert ^{\alpha }\right) \text{, \ }\forall n\in 
\mathbb{Z}
.
\end{equation*}%
Moreover, for every closed set $F\subset 
\mathbb{C}
,$ 
\begin{equation*}
TX_{B}\left( F\right) =TX_{A}(f^{-1}\left( F\right) \subseteq
X_{A}(f^{-1}\left( F\right) =X_{B}\left( F\right) ,
\end{equation*}%
where $f\left( z\right) =\log z.$ By Proposition 3.1, $BT=TB$ which implies $%
e^{B}T=Te^{B}.$ Hence $AT=TA.$

Assume that $AT=TA.$ It suffices to show that $\sigma _{A}\left( Tx\right)
\subseteq \sigma _{A}\left( x\right) $ for every $x\in X.$ If $x\in X$ and $%
\lambda \in \rho _{A}\left( x\right) ,$ then there is a neighborhood $%
U_{\lambda }$ of $\lambda $ with $u\left( z\right) $ analytic on $U_{\lambda
}$ having values in $X$, such that 
\begin{equation*}
\left( zI-A\right) u\left( z\right) =x\text{, \ }\forall z\in U_{\lambda }.
\end{equation*}%
It follows that 
\begin{equation*}
\left( zI-A\right) Tu\left( z\right) =Tx\text{, \ }\forall z\in U_{\lambda }.
\end{equation*}%
This shows that $\lambda \in \rho _{A}\left( Tx\right) .$
\end{proof}

\section{The norm of the commutator $AT-TA$}

In this section, we give some estimates for the norm of the commutator $%
AT-TA,$ where $T\in \mathcal{D}_{A}^{\alpha }\left( 
\mathbb{R}
\right) $ $\left( 0\leq \alpha <1\right) .$

\begin{lemma}
Let $\mu \in M_{\omega }\left( 
\mathbb{R}
\right) ,$ where $\omega \left( t\right) =\left( 1+\left\vert t\right\vert
\right) ^{\alpha }$ $\left( \alpha \geq 0\right) .$ Assume that $T\in
B\left( X\right) $ and $x\in X$ satisfy the following conditions:

$\left( i\right) $ $\left\Vert e^{tT}x\right\Vert \leq C\omega \left(
t\right) $ for all $t\in 
\mathbb{R}
$ and for some $C>0;$

$\left( ii\right) $ $T$ has SVEP.

If $\widehat{\mu }\left( \lambda \right) =\lambda $ in a neighborhood of $%
i\sigma _{T}\left( x\right) ,$ then 
\begin{equation*}
x_{\mu }=iTx.
\end{equation*}
\end{lemma}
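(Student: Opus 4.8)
The plan is to reduce the assertion to the function-level statement already available in Proposition 2.5 by manufacturing an $L_{\omega}^{1}\left(\mathbb{R}\right)$ comparison function whose Fourier transform agrees with $\widehat{\mu}$ near $i\sigma_{T}\left(x\right)$. Since $\sigma_{T}\left(x\right)$ is compact, so is $i\sigma_{T}\left(x\right)$, and I can choose $\phi\in S\left(\mathbb{R}\right)$ with $\widehat{\phi}\equiv1$ on a neighborhood of $i\sigma_{T}\left(x\right)$ (take the inverse Fourier transform of a smooth bump equal to $1$ near this compact set). Setting $\nu:=-i\phi^{\prime}$, the identity $\widehat{\phi^{\prime}}\left(\lambda\right)=i\lambda\widehat{\phi}\left(\lambda\right)$ gives $\widehat{\nu}\left(\lambda\right)=\lambda\widehat{\phi}\left(\lambda\right)$, which equals $\lambda$, and hence equals $\widehat{\mu}\left(\lambda\right)$, on a (possibly smaller) neighborhood of $i\sigma_{T}\left(x\right)$. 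Thus the measure $\varsigma:=\mu-\nu\in M_{\omega}\left(\mathbb{R}\right)$ satisfies $\widehat{\varsigma}=0$ on a neighborhood of $i\sigma_{T}\left(x\right)$, and it will suffice to prove $x_{\varsigma}=0$ and then evaluate $x_{\nu}$.

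The main obstacle is that Proposition 2.5(b) is stated only for $f\in L_{\omega}^{1}\left(\mathbb{R}\right)$, whereas $\varsigma$ is a genuine measure; I therefore need the measure analogue, namely that $\widehat{\varsigma}$ vanishing near $i\sigma_{T}\left(x\right)$ forces $x_{\varsigma}=0$. I would obtain this by approximation. Let $\left\{e_{n}\right\}$ be the b.a.i.\ for $L_{\omega}^{1}\left(\mathbb{R}\right)$. Since $L_{\omega}^{1}\left(\mathbb{R}\right)$ is an ideal of $M_{\omega}\left(\mathbb{R}\right)$, each $\varsigma\ast e_{n}$ lies in $L_{\omega}^{1}\left(\mathbb{R}\right)$, and $\widehat{\varsigma\ast e_{n}}=\widehat{\varsigma}\,\widehat{e_{n}}$ still vanishes on the same neighborhood of $i\sigma_{T}\left(x\right)$; hence $x_{\varsigma\ast e_{n}}=0$ by Proposition 2.5(b). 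Using the convolution identity $\left(x_{\varsigma}\right)_{e_{n}}=x_{\varsigma\ast e_{n}}$ together with $x_{\varsigma}\in E_{T}^{\omega}$ and the established fact that $y_{e_{n}}\rightarrow y$ for every $y\in E_{T}^{\omega}$, I pass to the limit to get $x_{\varsigma}=0$, that is, $x_{\mu}=x_{\nu}$.

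It then remains to compute $x_{\nu}$. By linearity $x_{\nu}=-i\,x_{\phi^{\prime}}$, and Lemma 2.6 with $k=1$ gives $x_{\phi^{\prime}}=-Tx_{\phi}$, so $x_{\nu}=iTx_{\phi}$. Finally, since $\widehat{\phi}\equiv1$ on a neighborhood of $i\sigma_{T}\left(x\right)$, Proposition 2.5(c) yields $x_{\phi}=x$, whence $x_{\mu}=x_{\nu}=iTx$, as required. (The degenerate case $x=0$, where $\sigma_{T}\left(x\right)=\emptyset$ by SVEP, is trivial, as both sides vanish.) I expect every step apart from the measure-to-function reduction to be routine; that reduction is the genuine point where the Tauberian and approximate-identity machinery of the section is used.
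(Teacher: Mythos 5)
Your proof is correct, and the core mechanism is the same as the paper's: pick $\phi\in S(\mathbb{R})$ (the paper calls it $g$) with $\widehat{\phi}\equiv 1$ near $i\sigma_{T}(x)$, use Lemma 2.6 to get $x_{\phi'}=-Tx_{\phi}$ and Proposition 2.5(c) to get $x_{\phi}=x$, and observe that $\mu$ and $-i\phi'$ have the same Fourier transform near $i\sigma_{T}(x)$. The one genuine divergence is how you each get around the fact that Proposition 2.5(b) is stated only for $L_{\omega}^{1}(\mathbb{R})$ and not for measures. The paper sidesteps the issue entirely: since $x_{g}=x$, it writes $x_{\mu}=(x_{g})_{\mu}=x_{\mu\ast g}$, and $\mu\ast g$ already lies in $L_{\omega}^{1}(\mathbb{R})$ with $\widehat{\mu\ast g}(\lambda)=\lambda\widehat{g}(\lambda)=-i\,\widehat{g'}(\lambda)$ near $i\sigma_{T}(x)$, so Proposition 2.5(b) applies directly to the single $L^{1}$ function $-ig'-\mu\ast g$. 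You instead prove the measure analogue of 2.5(b) for $\varsigma=\mu+i\phi'$ by convolving with the bounded approximate identity $\{e_{n}\}$ and passing to the limit via $(x_{\varsigma})_{e_{n}}=x_{\varsigma\ast e_{n}}\to x_{\varsigma}$; this is valid (all the ingredients — $x_{\varsigma}\in E_{T}^{\omega}$, $y_{e_{n}}\to y$, and the ideal property of $L_{\omega}^{1}$ in $M_{\omega}$ — are established in Section 2) and gives you a reusable measure version of 2.5(b) as a byproduct, at the cost of a few extra lines compared with the paper's one-line convolution trick.
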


\begin{proof}
Let $g\in S\left( 
\mathbb{R}
\right) $ be such that $\widehat{g}\left( \lambda \right) =1$ in a
neighborhood of $i\sigma _{T}\left( x\right) .$ By Proposition 2.5, $%
x_{g}=x. $ On the other hand, by Lemma 2.6, 
\begin{equation*}
x_{g^{\prime }}=-Tx.
\end{equation*}%
Since%
\begin{equation*}
\widehat{g^{\prime }}\left( \lambda \right) =i\lambda \widehat{g}\left(
\lambda \right) ,
\end{equation*}%
the Fourier transform of the function $-ig^{\prime }-\mu \ast g$ vanishes in
a neighborhood of $i\sigma _{T}\left( x\right) .$ By Proposition 2.5, 
\begin{equation*}
-ix_{g^{\prime }}=x_{\mu \ast g}=\left( x_{g}\right) _{\mu }=x_{\mu }.
\end{equation*}%
Hence $x_{\mu }=iTx.$
\end{proof}

Note that in the preceding lemma, the weight function $\omega \left(
t\right) =\left( 1+\left\vert t\right\vert \right) ^{\alpha }$ $\left(
\alpha \geq 0\right) $ can be replaced by the weight $\omega \left( t\right)
=1+\left\vert t\right\vert ^{\alpha }$ $.$

\begin{theorem}
Assume that $T\in B\left( X\right) $ has SVEP and $x\in X$ satisfies the
condition 
\begin{equation*}
\left\Vert e^{tT}x\right\Vert \leq C\left( 1+\left\vert t\right\vert
^{\alpha }\right) \text{ }\left( 0\leq \alpha <1\right) ,
\end{equation*}%
for all $t\in 
\mathbb{R}
$ and for some $C>0.$ Then we have%
\begin{equation*}
\left\Vert Tx\right\Vert \leq C\left[ r_{T}\left( x\right) +C\left( \alpha
\right) r_{T}\left( x\right) ^{1-\alpha }\right] ,
\end{equation*}%
where%
\begin{equation}
C\left( \alpha \right) =\left( \frac{2}{\pi }\right) ^{2-\alpha
}\sum\limits_{k\in 
\mathbb{Z}
}\frac{1}{\left\vert 2k+1\right\vert ^{2-\alpha }}.  \label{4.1}
\end{equation}
\end{theorem}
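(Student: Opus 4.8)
The plan is to realize $Tx$ exactly as an integral $x_\mu$ against a cleverly chosen measure and then read off the estimate from the growth hypothesis via Lemma 4.1. By Proposition 2.3 the local spectrum satisfies $\sigma_T(x)\subset i\mathbb{R}$, and since $r_T(x)=\sup\{|\lambda|:\lambda\in\sigma_T(x)\}$, writing $r:=r_T(x)$ we get $i\sigma_T(x)\subseteq[-r,r]$. Now by Lemma 4.1, applied with the weight $\omega(t)=1+|t|^{\alpha}$ (permitted by the remark following that lemma), any $\mu\in M_{\omega}(\mathbb{R})$ whose Fourier--Stieltjes transform satisfies $\widehat{\mu}(\lambda)=\lambda$ on a neighborhood of $[-r,r]$ gives $x_{\mu}=iTx$, whence
\begin{equation*}
\|Tx\|=\|x_{\mu}\|\leq\int_{\mathbb{R}}\|e^{tT}x\|\,d|\mu|(t)\leq C\int_{\mathbb{R}}\left(1+|t|^{\alpha}\right)d|\mu|(t).
\end{equation*}
So the problem reduces to producing a measure with $\widehat{\mu}(\lambda)=\lambda$ near $[-r,r]$ whose weighted variation $\int(1+|t|^{\alpha})\,d|\mu|$ is no larger than the asserted bound.

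For the construction I would fix $\varepsilon>0$, put $r'=r+\varepsilon$, and let $\mu=\mu_{\varepsilon}$ be the discrete measure whose transform is the triangle wave of period $4r'$ that equals $\lambda$ on $[-r',r']$. This continuous, odd, piecewise-linear wave has the classical expansion with only odd harmonics,
\begin{equation*}
\widehat{\mu}(\lambda)=\frac{8r'}{\pi^{2}}\sum_{m=0}^{\infty}\frac{(-1)^{m}}{(2m+1)^{2}}\sin\!\left(\frac{(2m+1)\pi\lambda}{2r'}\right),
\end{equation*}
which equals $\lambda$ on $(-r',r')\supset[-r,r]$, so the hypothesis of Lemma 4.1 is met. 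Expanding each sine via $e^{\pm i\theta}$ exhibits $\mu_{\varepsilon}$ as a sum of point masses located at $t=\pm(2m+1)\pi/(2r')$, each of mass $\tfrac{4r'}{\pi^{2}(2m+1)^{2}}$.

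It then remains to evaluate the two parts of $\int(1+|t|^{\alpha})\,d|\mu_{\varepsilon}|$. The unweighted part gives $\frac{8r'}{\pi^{2}}\sum_{m\geq0}(2m+1)^{-2}=r'$, using $\sum_{m\geq0}(2m+1)^{-2}=\pi^{2}/8$. The $|t|^{\alpha}$-weighted part produces $\sum_{m\geq0}(2m+1)^{-(2-\alpha)}$, and collecting constants via $8\,\pi^{-(2-\alpha)}2^{-\alpha}=2^{3-\alpha}\pi^{-(2-\alpha)}$ reproduces exactly the factor $C(\alpha)$ of $(\ref{4.1})$ (recall $\sum_{k\in\mathbb{Z}}|2k+1|^{-(2-\alpha)}=2\sum_{m\geq0}(2m+1)^{-(2-\alpha)}$), so this part equals $C(\alpha)(r')^{1-\alpha}$. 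Hence $\|Tx\|\leq C[\,r'+C(\alpha)(r')^{1-\alpha}\,]$, and letting $\varepsilon\to0$ gives the claim.

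The crucial point, and the only place where $\alpha<1$ is genuinely needed, is the finiteness of $\int(1+|t|^{\alpha})\,d|\mu_{\varepsilon}|$: the atoms sit at $|t|\sim(2m+1)$ with masses $\sim(2m+1)^{-2}$, so the weighted series behaves like $\sum(2m+1)^{\alpha-2}$, convergent exactly when $2-\alpha>1$. This also explains why the triangle wave is the right object rather than the sawtooth that equals $\lambda$ on a whole period: its $1/(2m+1)^{2}$ decay keeps $\mu_{\varepsilon}$ in $M_{\omega}(\mathbb{R})$ for every $0\leq\alpha<1$, while its restriction to odd harmonics is precisely what makes the constant collapse to the sum over odd integers in $C(\alpha)$. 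I expect the main labor to be the bookkeeping verifying that the standard triangle-wave coefficients combine with the mass locations to give $r'+C(\alpha)(r')^{1-\alpha}$ exactly; the analytic content is otherwise carried entirely by Lemma 4.1.
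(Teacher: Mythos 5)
Your proposal is correct and is essentially the paper's own argument: the same periodic triangle wave (period $4a$ with $a=r_T(x)+\varepsilon$) realized as the Fourier--Stieltjes transform of a discrete measure supported on the odd multiples of $\pi/(2a)$, the same appeal to Lemma 4.1 to get $x_\mu=iTx$, and the same computation of $\|\mu\|_\omega=a+C(\alpha)a^{1-\alpha}$ followed by letting $a\downarrow r_T(x)$. The only differences are cosmetic (real sine series versus complex exponentials, and the explicit remark that $\alpha<1$ is what keeps $\mu$ in $M_\omega(\mathbb{R})$).
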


\begin{proof}
We basically follow the proof of Lemma 3.4 in \cite{12}. Let an arbitrary $%
a>r_{T}\left( x\right) $ be fixed. Consider the function $f,$ defined by $%
f\left( \lambda \right) =\lambda $ for $-a\leq \lambda \leq a$ and $f\left(
\lambda \right) =2a-\lambda $ for $a\leq \lambda \leq 3a.$ We extend this
function periodically to the real line by putting $f\left( \lambda
+4a\right) =f\left( \lambda \right) $ $\left( \lambda \in 
\mathbb{R}
\right) .$ A few lines of computation show that the Fourier coefficients of $%
f$ are given by the equalities: 
\begin{equation*}
c_{2k}\left( f\right) =0\text{, }c_{2k+1}\left( f\right) =\frac{1}{i}\frac{4a%
}{\pi ^{2}}\left( -1\right) ^{k}\frac{1}{\left( 2k+1\right) ^{2}}\text{ \ }%
\left( k\in 
\mathbb{Z}
\right) .
\end{equation*}%
Let $\mu $ be a discrete measure on $%
\mathbb{R}
$ concentrated at the points 
\begin{equation*}
\lambda _{k}:=-\frac{1}{a}\left( 2k+1\right) \frac{\pi }{2}\text{ \ }\left(
k\in 
\mathbb{Z}
\right) ,
\end{equation*}%
with the corresponding weights 
\begin{equation*}
c_{k}:=\frac{1}{i}\frac{4a}{\pi ^{2}}\left( -1\right) ^{k}\frac{1}{\left(
2k+1\right) ^{2}}\text{ \ }\left( k\in 
\mathbb{Z}
\right) .
\end{equation*}%
Since 
\begin{equation*}
\sum\limits_{k\in 
\mathbb{Z}
}\left\vert c_{k}\right\vert <\infty ,
\end{equation*}%
it follows from the uniqueness theorem that%
\begin{equation*}
\widehat{\mu }\left( \lambda \right) =f\left( \lambda \right) =\frac{1}{i}%
\frac{4a}{\pi ^{2}}\sum\limits_{k\in 
\mathbb{Z}
}\left( -1\right) ^{k}\frac{1}{\left( 2k+1\right) ^{2}}\exp \left[ i\frac{1}{%
a}\left( 2k+1\right) \frac{\pi }{2}\lambda \right] .
\end{equation*}%
Now, if $\omega \left( t\right) :=1+\left\vert t\right\vert ^{\alpha }$ $%
\left( 0\leq \alpha <1\right) ,$ then as 
\begin{equation*}
\sum\limits_{k\in 
\mathbb{Z}
}\frac{1}{\left( 2k+1\right) ^{2}}=\frac{\pi ^{2}}{4},
\end{equation*}%
we can write

\begin{eqnarray*}
\left\Vert \mu \right\Vert _{\omega } &=&\int\limits_{%
\mathbb{R}
}\left( 1+\left\vert t\right\vert ^{\alpha }\right) d\left\vert \mu
\right\vert \left( t\right) =\sum\limits_{k\in 
\mathbb{Z}
}\left\vert c_{k}\right\vert \left( 1+\left\vert \lambda _{k}\right\vert
^{\alpha }\right) \\
&=&\frac{4a}{\pi ^{2}}\sum\limits_{k\in 
\mathbb{Z}
}\frac{[1+\frac{1}{a^{\alpha }}\left\vert 2k+1\right\vert ^{\alpha }\left( 
\frac{\pi }{2}\right) ^{\alpha }]}{\left( 2k+1\right) ^{2}} \\
&=&a+C\left( \alpha \right) a^{1-\alpha },
\end{eqnarray*}%
where 
\begin{equation*}
C\left( \alpha \right) =\left( \frac{2}{\pi }\right) ^{2-\alpha
}\sum\limits_{k\in 
\mathbb{Z}
}\frac{1}{\left\vert 2k+1\right\vert ^{2-\alpha }}.
\end{equation*}%
Since $\widehat{\mu }\left( \lambda \right) =\lambda $ in a neighborhood of $%
i\sigma _{T}\left( x\right) ,$ by Lemma 4.1, $x_{\mu }=iTx$. Therefore, we
get 
\begin{equation*}
\left\Vert Tx\right\Vert =\left\Vert x_{\mu }\right\Vert \leq C\left\Vert
\mu \right\Vert _{\omega }\leq C\left[ a+C\left( \alpha \right) a^{1-\alpha }%
\right] .
\end{equation*}%
Since $a>r_{T}\left( x\right) $ is arbitrary, we obtain our result.
\end{proof}

As an application of Theorem 4.2, we have the following quantitative version
of Theorem 2.1 in the case $0\leq \alpha <1.$

\begin{corollary}
Let $A\in B\left( X\right) $ and assume that $\Delta _{A}$ has SVEP. If $%
T\in B\left( X\right) $ satisfies the condition 
\begin{equation*}
\left\Vert e^{tA}Te^{-tA}\right\Vert \leq C_{T}\left( 1+\left\vert
t\right\vert ^{\alpha }\right) \text{ }\left( 0\leq \alpha <1\right) ,
\end{equation*}%
for all $t\in 
\mathbb{R}
$ and for some $C_{T}>0,$ then 
\begin{equation*}
\left\Vert AT-TA\right\Vert \leq C_{T}\left[ r_{\Delta _{A}}\left( T\right)
+C\left( \alpha \right) r_{\Delta _{A}}\left( T\right) ^{1-\alpha }\right] ,
\end{equation*}%
where $C\left( \alpha \right) $ is defined by $\left( 4.1\right) .$
\end{corollary}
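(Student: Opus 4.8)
The plan is to recognize Corollary 4.3 as the specialization of Theorem 4.2 to the inner derivation $\Delta_A$ acting on the Banach space $B(X)$, with the operator $T$ itself playing the role of the vector $x$. First I would recall the identity $e^{tA}Te^{-tA}=e^{t\Delta_A}(T)$, already exploited in the proof of Theorem 2.2, which rewrites the hypothesis $\left\Vert e^{tA}Te^{-tA}\right\Vert \le C_T\left(1+\left\vert t\right\vert^{\alpha}\right)$ as $\left\Vert e^{t\Delta_A}(T)\right\Vert \le C_T\left(1+\left\vert t\right\vert^{\alpha}\right)$ for all $t\in\mathbb{R}$. Thus, regarding $\Delta_A$ as a bounded operator on $B(X)$ and $T\in B(X)$ as the relevant vector, hypothesis $(i)$ of Theorem 4.2 holds with the weight $\omega(t)=1+\left\vert t\right\vert^{\alpha}$ and constant $C=C_T$.

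Next I would verify hypothesis $(ii)$: by assumption $\Delta_A$ has SVEP, so Theorem 4.2 applies verbatim. Its conclusion reads $\left\Vert \Delta_A(T)\right\Vert \le C_T\left[r_{\Delta_A}(T)+C(\alpha)r_{\Delta_A}(T)^{1-\alpha}\right]$, where $r_{\Delta_A}(T)$ is the local spectral radius of $\Delta_A$ at $T$ and $C(\alpha)$ is the constant in $(4.1)$. Since $\Delta_A(T)=AT-TA$ by the very definition of the inner derivation, we have $\left\Vert \Delta_A(T)\right\Vert =\left\Vert AT-TA\right\Vert$, and the stated estimate follows directly.

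Two minor points warrant attention, neither of which is a genuine obstacle. First, Theorem 4.2 is stated for the weight $1+\left\vert t\right\vert^{\alpha}$ rather than $\left(1+\left\vert t\right\vert\right)^{\alpha}$, and this matches the hypothesis of the corollary exactly, so no rescaling of the weight is required. Second, all the local spectral machinery invoked inside the proof of Theorem 4.2 — the definition of $r_{\Delta_A}(T)$ via Proposition 2.3 and the measure construction of Lemma 4.1 — transfers without modification once $B(X)$ is taken as the ambient Banach space; the only thing one needs is precisely that $\Delta_A$ enjoys SVEP, which is assumed outright. Consequently the corollary is an immediate transcription of Theorem 4.2 to the commutator setting, and the ``hard part'' is merely the bookkeeping that identifies $\Delta_A(T)$ with $AT-TA$ and confirms that the constants line up.
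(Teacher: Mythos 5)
Your proposal is correct and coincides with the paper's own proof: both apply Theorem 4.2 to the derivation $\Delta_A$ acting on $B(X)$ with $T$ as the vector, using $e^{tA}Te^{-tA}=e^{t\Delta_A}(T)$ and the assumed SVEP of $\Delta_A$. No further comment is needed.
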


\begin{proof}
Noting that 
\begin{equation*}
\left\Vert e^{t\Delta _{A}}\left( T\right) \right\Vert =\left\Vert
e^{tA}Te^{-tA}\right\Vert \leq C_{T}\left( 1+\left\vert t\right\vert
^{\alpha }\right) \text{ \ }\left( \forall t\in 
\mathbb{R}
\right) ,
\end{equation*}%
by Theorem 4.2,%
\begin{equation*}
\left\Vert AT-TA\right\Vert \leq C_{T}\left[ r_{\Delta _{A}}\left( T\right)
+C\left( \alpha \right) r_{\Delta _{A}}\left( T\right) \right] .
\end{equation*}
\end{proof}

\end{document}